\newcommand{\dd}{\mathrm{d}}
\newcommand{\E}{\mathbb{E}}
\newcommand{\1}{\textbf{1}}
\newcommand{\R}{\mathbb{R}}
\newcommand{\C}{\mathbb{C}} 
\newcommand{\e}{\varepsilon}
\newcommand{\cM}{\mathcal{M}}
\newcommand{\cU}{\mathcal{U}}
\newcommand{\cW}{\mathcal{W}}
\newcommand{\cB}{\mathcal{B}}
\newcommand{\cF}{\mathcal{F}}
\newcommand{\cG}{\mathcal{G}}
\newcommand{\p}[1]{\mathbb{P}\left( #1 \right)}
\newcommand{\n}[1]{\left\| #1 \right\|}
\newcommand{\red}{}
\newcommand{\purp}{}
\DeclareMathOperator{\sgn}{sgn}
\newtheorem{theorem}{Theorem}
\newtheorem{lemma}[theorem]{Lemma}
\newtheorem{corollary}[theorem]{Corollary}
\theoremstyle{remark}
\newtheorem{remark}[theorem]{Remark}
\theoremstyle{definition}
\title{Sharp Rosenthal-type inequalities for mixtures\\ and log-concave variables}
\author{Giorgos Chasapis}
\author{Alexandros Eskenazis}
\author{Tomasz Tkocz}
\address{(G.~C.) Carnegie Mellon University; Pittsburgh, PA 15213, USA.\newline Current affiliation: University of Crete, Voutes Campus 70013, Heraklion, Crete, Greece.}
\email{gchasapis@uoc.gr}
\address{(A.~E.) CNRS, Institut de Math\'ematiques de Jussieu, Sorbonne Universit\'e, France and Trinity College, University of Cambridge, UK.}
\email{alexandros.eskenazis@imj-prg.fr, ae466@cam.ac.uk}
\address{(T.~T.) Carnegie Mellon University; Pittsburgh, PA 15213, USA.}
\email{ttkocz@andrew.cmu.edu}
\thanks{G.C. was supported by the Hellenic Foundation for Research and Innovation, Project HFRI-FM17-1733 and by University of Crete Grant 4725. AE was partially supported by a Junior Research Fellowship from Trinity College, Cambridge. TT's research supported in part by NSF grant DMS-1955175.}
\begin{document}

\begin{abstract} 
We obtain Rosenthal-type inequalities with sharp constants for moments of sums of independent random variables which are mixtures of a fixed distribution. We also identify extremisers in log-concave settings when the moments of summands are individually constrained.
\end{abstract}

\maketitle

\bigskip

\begin{footnotesize}
\noindent {\em 2020 Mathematics Subject Classification.} Primary 60E15; Secondary 60G50, 26D15.

\noindent {\em Key words. ~Rosenthal's inequality, sums of independent random variables, mixtures, log-concave random variables.} 
\end{footnotesize}

\bigskip

%------------------------------------------------------------
%------------------------------------------------------------

\section{Introduction}

Rosenthal's inequality, discovered in \cite{R} in connection with questions in the geometry of Banach spaces, provides matching lower and upper bounds for $p$-norms of sums of independent symmetric random variables in terms of norms of the individual summands, at the expense of a multiplicative constant depending only on $p$.  Recall that a random variable $X$ is symmetric if it has the same distribution as $-X$ or, equivalently, if it has the same distribution as $\e X$, where $\e$ is a {\red Rademacher random variable (a symmetric random sign, $\p{\e = \pm 1} = \frac12$)} independent of $X$. Fix $p > 2$ and let $X_1, X_2,\dots$ be independent symmetric random variables in $L_p$. Plainly, for any $n\in\mathbb{N}$ we have
\[
\n{\sum_{j=1}^n X_j}_p \geq \n{\sum_{j=1}^n X_j}_2 = \left(\sum_{j=1}^n \n{X_j}_2^2\right)^{1/2},
\]
where here and throughout $\|Y\|_p = (\E|Y|^p)^{1/p}$ is the $p$-norm of a random variable $Y$.  Moreover, for independent Rademacher random variables $\e_1, \e_2, \dots$ independent of $X_1,X_2,\ldots$,  we have
\begin{align*}
\n{\sum_{j=1}^n X_j}_p^p = \E_X\E_{\e}\left|\sum_{j=1}^n \e_jX_j\right|^p \geq \E_X\left(\E_{\e}\left|\sum_{j=1}^n \e_jX_j\right|^2\right)^{p/2} = \E_X\left(\sum_{j=1}^n |X_j|^2\right)^{p/2} 
\geq \sum_{j=1}^n \E|X_j|^p,
\end{align*}
thus
\begin{equation}\label{eq:ros-easy}
\n{\sum_{j=1}^n X_j}_p \geq \max\left\{\left(\sum_{j=1}^n \n{X_j}_2^2\right)^{1/2}, \left(\sum_{j=1}^n \n{X_j}_p^p\right)^{1/p}\right\}
\end{equation}
and the multiplicative constant $1$ in front of the maximum in this inequality is clearly optimal (simply consider $n=1$). Rosenthal in his influential paper \cite{R} established a reversal of \eqref{eq:ros-easy}: 
for every $2 < p < \infty$, there is a constant $C_p$ which depends only on $p$ such that for every $n\in\mathbb{N}$ and every independent symmetric random variables  $X_1, \dots, X_n$  in $L_p$, we have
\begin{equation}\label{eq:ros}
\n{\sum_{j=1}^n X_j}_p \leq C_p\max\left\{\left(\sum_{j=1}^n \n{X_j}_2^2\right)^{1/2}, \left(\sum_{j=1}^n \n{X_j}_p^p\right)^{1/p}\right\}.
\end{equation}

Rosenthal's original motivation in \cite{R} was to construct new subspaces of $L_p(\mu)$ spaces, answering important questions raised in \cite{LP} and \cite{LR}. His fundamental inequality \eqref{eq:ros}, which substantially generalises Khinchin's inequality, has also received significant attention in probabilistic literature, prompting several fruitful lines of research. To mention only some in passing, there are generalisations to dependent settings of martingale differences, multilinear forms, and the like (see \cite{Bur, Hit,ISC,PIS, P2, P1}), random vectors in Banach spaces (see \cite{Tal,P1}), noncommutative settings (see \cite{JX1,JPX, JX2, JZ}), as well as many works devoted to optimal inequalities with sharp constants in various setups (see, e.g.  \cite{Pr,CK,PU,JSZ,U,FHJSZ,ISh,Sch,P5,II,Ru,P3,P4}).

To describe the last direction in more detail, let ${\bf C}_p$ denote the best constant in Rosenthal's inequality, namely the least $C_p$ such that \eqref{eq:ros} holds for every $n$ and every sequence of independent symmetric random variables $X_1,X_2,\ldots$ in $L_p$. Rosenthal's proof gives ${\bf C}_p = O(p^p)$. The sharp behaviour ${\bf C}_p = \Theta(p/\log p)$ as $p \to \infty$ was established by Johnson, Schechtman and Zinn in \cite{JSZ}. This can also be deduced from Lata\l a's precise formula for moments of sums in terms of marginal distributions (see  \cite[Corollary 3]{L}). The exact value of ${\bf C}_p$ is known to be
\begin{equation}\label{eq:ros-Cp}
{\bf C}_p = \begin{cases} (1 + \|Z\|_p^p)^{1/p}, & 2 < p \leq 4, \\ \n{\sum_{j=1}^\xi \e_j}_p, & p \geq 4, \end{cases}
\end{equation}
where $Z$ is a standard Gaussian random variable and $\xi$ is a Poisson random variable with parameter $1$, independent of the Rademacher sequence $\e_1,\e_2,\ldots$.
For $p\geq4$, the value of ${\bf C}_p$ was found by Utev in \cite{U} (continuing investigations of Prokhorov from \cite{Pr} and Pinelis and Utev from \cite{PU}). When $2 < p < 4$,  \eqref{eq:ros-Cp} was proven by Ibragimov and Sharakhmetov in \cite{ISh} (building on Utev's approach), and, independently, by Figiel, Hitczenko, Johnson, Schechtman and Zinn in \cite{FHJSZ} (with different methods treating a more general case of Orlicz functionals in place of moments).

The question of identifying the value of ${\bf C}_p$ can be restated as an extremal problem, since
\begin{equation} \label{eq:ros-extremal}
{\bf C}_p = \sup\left\{ \left\|\sum_{j=1}^n X_j\right\|_p: \ n\in\mathbb{N} \mbox{ and } \max\Big\{\sum_{j=1}^n \n{X_j}_2^2, \sum_{j=1}^n \n{X_j}_p^p\Big\} \leq 1 \right\}.
\end{equation}
This reformulation has led to the study of many \emph{Rosenthal-type} extremal problems, in which the supremal value of the $p$-norm of a sum of independent variables is sought for under more refined assumptions, including constraints of the form $\max\big\{\sum_{j=1}^n \n{X_j}_2^2, \lambda\sum_{j=1}^n \n{X_j}_p^p\big\}\leq 1$ for suitable parameters $\lambda>0$, or $\max\{\|X_j\|_2, \lambda_j \|X_j\|_p\}\leq \mu_j$, where $\lambda_j,\mu_j>0$. As we shall see in the next section, (asymptotic) maximisers of such multi-constraint problems can frequently be identified which, in turn,  often leads to optimal constants in various Rosenthal-type inequalities.

The present paper is concerned with the following question: what is the nature of maximisers of Rosenthal-type extremal problems if we a priori assume that the independent variables $X_1,X_2,\ldots$ have additional properties beyond symmetry,  such as unimodality or log-concavity? Can one identify the optimal value of the corresponding optimal constant ${\bf C}_p$ under these assumptions? Building on Utev's approach from \cite{U}, we fully answer these questions in the case of general mixtures and we also characterize the extremising sequences in the log-concave case,  using an effective refinement \mbox{of the one-dimensional localisation principle (see \cite{LS,FG1,FG2}) developed in \cite{ENT}.}

%Our results are presented in the next section which is followed by two sections devoted to their proofs. We finish with concluding remarks and further directions.

{\purp
\subsection*{Acknowledgements.} We should like to thank the referee for a careful reading of the manuscript and many invaluable suggestions, including Remarks \ref{rem:Mp>4} and \ref{rem:Xpm}. We are also indebted to Olivier Gu\'edon for the discussion on the complex case.
}

%------------------------------------------------------------
%------------------------------------------------------------

\section{Results}

%------------------------------------------------------------

\subsection{General mixtures}

Let $V$ be a symmetric random variable. We say that a random variable $X$ is a $V$-mixture if it has the same distribution as $RV$ for some nonnegative random variable $R$, independent of $V$. In particular, when $V$ is a Rademacher random variable, $V$-mixtures are exactly symmetric random variables and when $V$ is uniform, $V$-mixtures are exactly symmetric unimodal random variables (see, e.g.,~\cite[Lemma 1]{LO95}). When $V$ is standard Gaussian random variable, we refer to a $V$-mixture as a (symmetric) Gaussian mixture (see \cite{AH, ENTm}).

Following Utev \cite{U}, for $p > 2$, an integer $n \geq 1$,  sequences $a = (a_1, \dots, a_n)$,  $b = (b_1, \dots, b_n)$ of positive numbers and positive parameters $A, B$, we define the classes of $n$-tuples $X = (X_1, \dots, X_n)$ of independent $V$-mixtures with constrained moments as follows:
\begin{equation}
\cM_{V}(n, p, a,b) = \Big\{X: \ \mbox{each } X_j \mbox{ is a } V\mbox{-mixture with} \ \|X_j\|_2 \leq a_j \mbox{ and } \|X_j\|_p \leq b_j\Big\}, 
\end{equation}
and
\begin{equation}
\cU_{V}(n, p, A,B) = \left\{X: \ \mbox{each } X_j \mbox{ is a } V\mbox{-mixture and } \sum_{j=1}^n \|X_j\|_2^2 \leq A^2, \ \sum_{j=1}^n \|X_j\|_p^p \leq B^p\right\},
\end{equation}
where we have implicitly assumed that $a_j \leq b_j$ for every $j$ (for feasibility). We shall simply write $\cM$ and $\cU$ to denote $\cM_V$ and $\cU_V$ with $V$ being a Rademacher random variable, that is for the classes of independent symmetric random variables satisfying the constraints. Moreover, {\red $\cU_V'$ will denote the subclass of $\cU_V$} consisting of all the tuples $X = (X_1, \dots, X_n)$ of \emph{identically distributed} $V$-mixtures satisfying the imposed moment constraints.

Our first theorem is the solution of the following Rosenthal-type extremal problem for mixtures.

\begin{theorem}\label{thm:ros-mix}
Fix $A, B > 0$ and let $V$ be a symmetric random variable. For $2 < p < 4$, we have
%\begin{equation}\label{eq:ros-mix-p<4}
%\sup_{n \geq 1, X \in \cU_{V}(n, p, A,B)} \E\left|\sum_{j=1}^n X_j\right|^p = B^p + \|Z\|_p^pA^p,
%\end{equation}
\begin{equation}\label{eq:ros-mix-p<4}
 \sup\left\{ \E\left|\sum_{j=1}^n X_j\right|^p: \ n\in\mathbb{N} \mbox{ and } X\in \cU_{V}(n, p, A,B) \right\}= B^p + \|Z\|_p^pA^p,
\end{equation}
where $Z$ is a standard Gaussian, provided that $V$ is in $L_{p+\delta}$ for some $\delta > 0$. For $p \geq 4$, we have
%\begin{equation}\label{eq:ros-mix-p>4}
%\sup_{n \geq 1, X \in \cU_{V}(n, p, A,B)} \E\left|\sum_{j=1}^n X_j\right|^p  =  \left(\frac{B^p}{A^2}\frac{\|V\|_2^2}{\|V\|_p^p}\right)^{\frac{p}{p-2}}\E\left|\sum_{j=1}^\xi V_j\right|^p,
%\end{equation}
\begin{equation}\label{eq:ros-mix-p>4}
 \sup\left\{ \E\left|\sum_{j=1}^n X_j\right|^p: \ n\in\mathbb{N} \mbox{ and } X\in \cU_{V}(n, p, A,B) \right\}  =  \left(\frac{B^p}{A^2}\frac{\|V\|_2^2}{\|V\|_p^p}\right)^{\frac{p}{p-2}}\E\left|\sum_{j=1}^\xi \tilde V_j\right|^p,
\end{equation}
provided that $V$ is in $L_p$, {\purp where $\tilde V_1, \tilde V_2, \dots$ are i.i.d. copies of $V$ conditioned on $\{V \neq 0\}$ ($V$ with a potential atom at $0$ \emph{removed})} and $\xi$ is an independent Poisson random variable with parameter $\left(\frac{A}{B}\frac{\|V\|_p}{\|V\|_2}\right)^{\frac{2p}{p-2}}(1-\p{V=0})$.
\end{theorem}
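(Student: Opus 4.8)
The plan is to reduce the mixture problem to the classical Rosenthal extremal problem for symmetric random variables (equivalently, for weighted Rademacher sums conditioned on a random environment), by exploiting the representation $X_j \stackrel{d}{=} R_j V_j$ with $R_j \geq 0$ independent of the copies $V_j$ of $V$. Writing $V_j = \sigma_j |V_j|$ with $\sigma_j$ the sign (a Rademacher variable on the event $\{V_j \neq 0\}$, independent of $|V_j|$ by symmetry of $V$), we see that $\sum_j X_j \stackrel{d}{=} \sum_j \sigma_j (R_j |V_j|)$, so conditionally on the magnitudes $(R_j |V_j|)_j$ this is a weighted Rademacher sum. The moment constraints transform as $\|X_j\|_2^2 = \|R_j\|_2^2 \|V\|_2^2$ and $\|X_j\|_p^p = \|R_j\|_p^p \|V\|_p^p$; hence a tuple $X \in \cU_V(n,p,A,B)$ corresponds exactly to coefficients $c_j := \|R_j|V_j|\|_2$ with a prescribed relation between $\sum_j c_j^2$ and $\sum_j \E (R_j|V_j|)^p$. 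First I would make this correspondence precise and use it to rewrite the supremum over $\cU_V$ as a supremum of $\E|\sum_j \sigma_j W_j|^p$ over nonnegative random variables $W_j$ (the $W_j$ being $V$-mixtures themselves, but now one only needs to track $\|W_j\|_2, \|W_j\|_p$) subject to the two aggregate constraints. At this point the problem is formally the Rosenthal extremal problem \eqref{eq:ros-extremal} restricted to $V$-mixtures, and I would invoke Utev's method (as developed in \cite{U} and recalled in the paper) to pass from finitely many summands to the relevant limiting object.

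The second, and main, step is the identification of the extremiser. Following Utev, one expects the supremum to be attained in the limit by one of two configurations: either (i) a ``Gaussian'' regime in which the mass is spread over many summands each with small $L_p$-norm, so that by the CLT $\sum_j \sigma_j W_j$ converges to a Gaussian of variance $A^2$ and the $L_p$-budget $B^p$ is used up by a single leftover atom-like term, giving the value $B^p + \|Z\|_p^p A^p$; or (ii) a ``Poissonian'' regime in which each summand $W_j$ is essentially a scaled copy of $V$ itself (a two-point-type mixture at a single scale $t$, i.e. $W_j = t V$ with probability $q$ and $0$ otherwise, mimicking the classical compound-Poisson extremiser $\sum_{j=1}^\xi \e_j$), and one optimizes over the scale $t$ and the number $n$ with $nq \to \lambda$ fixed. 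For $2<p<4$ the convexity/concavity features of $x \mapsto x^{p/2}$ versus $x \mapsto x^p$ force regime (i) to dominate; for $p \geq 4$ regime (ii) dominates, and carrying out the scaling optimization — matching $\sum_j \|W_j\|_2^2 = A^2$ and $\sum_j \|W_j\|_p^p = B^p$ with $W_j$ a single copy of $V$ at intensity $\lambda$ — yields exactly the Poisson parameter $\lambda = (\frac{A}{B}\frac{\|V\|_p}{\|V\|_2})^{\frac{2p}{p-2}}(1-\p{V=0})$ and the prefactor $(\frac{B^p}{A^2}\frac{\|V\|_2^2}{\|V\|_p^p})^{\frac{p}{p-2}}$ after homogeneity bookkeeping. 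The conditioning of $V$ on $\{V \neq 0\}$ and the factor $(1-\p{V=0})$ arise because an atom of $V$ at $0$ contributes nothing to either moment and can be stripped out, reducing to the case $\p{V=0}=0$.

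The upper bound — that these configurations are actually optimal and not merely feasible — is where Utev's technique does the real work, and I expect this to be the main obstacle. The idea is to show that among all admissible tuples the functional $\E|\sum_j X_j|^p$ is maximized by tuples supported on the two extreme ``atoms'' described above, by a convexity argument in the space of distributions: one shows each $X_j$ may be replaced (without decreasing the objective and without violating the constraints) by a mixture of a multiple of $V$ and a ``small'' part, then that in the limit the small parts aggregate into a Gaussian while the $V$-scale parts aggregate into a compound Poisson. Concretely, I would (a) reduce to $X_j$ of the special form $t_j V$ with probability $q_j$, $0$ otherwise, via an extreme-point argument on the one-dimensional optimization that fixes $\|X_j\|_2$ and maximizes $\E|X_j|^p$ contributions; (b) take $n \to \infty$ with a triangular-array scheme, splitting indices into those with $t_j$ bounded below (which, by a Poisson limit theorem, contribute $\E|\sum_{j=1}^\xi t \tilde V_j|^p$-type terms) and those with $t_j \to 0$ (which contribute a Gaussian by Lindeberg); (c) optimize the resulting two-parameter expression. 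The integrability hypotheses ($V \in L_{p+\delta}$ for $p<4$, $V\in L_p$ for $p\geq 4$) are exactly what is needed to justify the limit theorems and the uniform integrability of the $p$-th powers along the triangular array. I would also need to check the reverse inequality ``$\geq$'' by explicitly exhibiting the near-extremal sequences, which is the easy direction and amounts to a direct computation with the Gaussian, resp. compound Poisson, limit.
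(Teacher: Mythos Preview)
Your high-level architecture is right and matches the paper: reduce to weighted Rademacher sums by conditioning, invoke Utev's framework, and distinguish a Gaussian regime ($2<p<4$) from a compound-Poisson regime ($p\geq 4$), with explicit near-extremal sequences giving the lower bounds. The lower-bound constructions you sketch are essentially those in the paper.

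Where you diverge from the paper is in the upper bounds, and in both ranges the paper takes a shorter road.

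For $2<p<4$: the paper simply observes $\cU_V \subseteq \cU$ and quotes the known value of the supremum over all symmetric random variables (Utev, Figiel--Hitczenko--Johnson--Schechtman--Zinn, Ibragimov--Sharakhmetov). Your appeal to ``convexity/concavity features of $x\mapsto x^{p/2}$ versus $x\mapsto x^p$'' forcing the Gaussian regime is not a proof and is unnecessary here; the upper bound is a one-line citation.

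For $p\geq 4$: the paper's route is significantly cleaner than your steps (a)--(c). First it proves (Lemma~\ref{lm:to-i.i.d.}) via Utev's Poissonisation estimate (Theorem~\ref{thm:Utev-Poisson}) that the supremum over $\cU_V$ equals that over the i.i.d.\ subclass $\cU_V'$. Then, for i.i.d.\ summands $X_j=R_j|V_j|\e_j$, it \emph{conditions on the $V_j$} and applies Utev's $3$-point theorem (Theorem~\ref{thm:Utev-3point}) directly to the symmetric variables $|V_j|R_j\e_j$ with per-summand budgets $a_j=|V_j|\,A n^{-1/2}/\|V\|_2$ and $b_j=|V_j|\,B n^{-1/p}/\|V\|_p$. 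The resulting Bernoulli parameter is the same for all $j$ and independent of $|V_j|$, so after taking $\E_V$ one lands immediately on $(\text{scale})\cdot\E|\sum_j \theta_j V_j|^p$; one more application of Theorem~\ref{thm:Utev-Poisson} Poissonises this and gives the stated right-hand side exactly. There is no triangular-array splitting, no separate Gaussian block, and no optimisation step.

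Two specific issues with your outline. First, your description of step~(a) --- ``the one-dimensional optimization that fixes $\|X_j\|_2$ and maximizes $\E|X_j|^p$ contributions'' --- is mis-phrased: the functional $\E|\sum X_j|^p$ is linear in each $X_j$'s law, so an extreme-point reduction of the $R_j$-distribution is legitimate, but you are not maximising individual $p$-th moments. Second, and more substantively, your step~(c) (``optimise the resulting two-parameter expression'') is where your route would actually require work: you would need to show, for $p\geq 4$, that the pure compound-Poisson configuration beats every Gaussian/Poisson mix. The paper avoids this entirely because the conditioning-on-$V_j$ trick makes Utev's Theorems~\ref{thm:Utev-Poisson} and~\ref{thm:Utev-3point} apply verbatim and deliver the extremiser without any residual optimisation.
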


Observe that when $2<p<4$, the solution \eqref{eq:ros-mix-p<4} of the extremal problem is independent of the law of $V$. Theorem \ref{thm:ros-mix} readily implies the following optimal Rosenthal inequality for mixtures.

\begin{corollary}\label{cor:ros-mix}
The best constant ${\bf C}_{p,V}$ such that  for any sequence $X_1, X_2,\dots$ of independent $V$-mixtures and any $n\geq1$, we have the inequality
\[\n{\sum_{j=1}^n X_j}_p \leq {\bf C}_{p,V}\max\left\{\left(\sum_{j=1}^n \n{X_j}_2^2\right)^{1/2}, \left(\sum_{j=1}^n \n{X_j}_p^p\right)^{1/p}\right\}\]
is
\begin{equation}\label{eq:ros-Cp-mix}
{\bf C}_{p,V} = \begin{cases} (1 + \|Z\|_p^p)^{1/p}, & 2 < p \leq 4, \\ \left(\frac{\|V\|_2^2}{\|V\|_p^p}\right)^{\frac{1}{p-2}}\n{\sum_{j=1}^\xi \tilde V_j}_p, & p \geq 4, \end{cases}
\end{equation}
where $Z$ is a standard Gaussian random variable, {\purp $\tilde V_1, \tilde V_2, \dots$ are i.i.d. copies of $V$ conditioned on $\{V \neq 0\}$} and $\xi$ is an independent Poisson random variable with parameter $\left(\frac{\|V\|_p}{\|V\|_2}\right)^{\frac{2p}{p-2}}(1-\p{V=0})$.
\end{corollary}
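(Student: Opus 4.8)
The plan is to obtain Corollary~\ref{cor:ros-mix} as a formal consequence of Theorem~\ref{thm:ros-mix} through a homogeneity argument. For a tuple $X=(X_1,\dots,X_n)$ of independent $V$-mixtures, write
\[
M(X)=\max\Big\{\Big(\sum_{j=1}^n\n{X_j}_2^2\Big)^{1/2},\ \Big(\sum_{j=1}^n\n{X_j}_p^p\Big)^{1/p}\Big\},
\]
so that, by definition, ${\bf C}_{p,V}$ is the least constant $C$ satisfying $\n{\sum_{j=1}^nX_j}_p\le C\,M(X)$ for every $n\in\mathbb{N}$ and every such tuple $X$. First I would note that if $X\overset{d}{=}RV$ with $R\ge0$ independent of $V$, then $cX\overset{d}{=}(cR)V$ is again a $V$-mixture for every $c>0$, and that $\n{\sum_jX_j}_p$ and $M(X)$ are both $1$-homogeneous under $X\mapsto cX$. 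Hence, given a nonzero tuple $X$, the rescaled tuple $X/M(X)$ belongs to $\bigcup_{n\ge1}\cU_V(n,p,1,1)$ and satisfies $\n{\sum_j X_j}_p/M(X)=\n{\sum_j(X_j/M(X))}_p$; conversely, every nonzero $Y\in\cU_V(n,p,1,1)$ satisfies $\n{\sum_j Y_j}_p\le M(Y)\,{\bf C}_{p,V}\le{\bf C}_{p,V}$ since $M(Y)\le1$. Combining the two observations,
\[
{\bf C}_{p,V}^p=\sup\Big\{\E\Big|\sum_{j=1}^nX_j\Big|^p:\ n\in\mathbb{N}\ \text{and}\ X\in\cU_V(n,p,1,1)\Big\}.
\]

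It then remains to invoke Theorem~\ref{thm:ros-mix} with $A=B=1$. For $2<p<4$, \eqref{eq:ros-mix-p<4} gives ${\bf C}_{p,V}^p=1+\n{Z}_p^p$, and for $p\ge4$, \eqref{eq:ros-mix-p>4} gives ${\bf C}_{p,V}^p=\big(\n{V}_2^2/\n{V}_p^p\big)^{p/(p-2)}\,\E\big|\sum_{j=1}^\xi\tilde V_j\big|^p$, where $\xi$ is a Poisson random variable with parameter $\big(\n{V}_p/\n{V}_2\big)^{2p/(p-2)}(1-\p{V=0})$, independent of the i.i.d.\ copies $\tilde V_1,\tilde V_2,\dots$ of $V$ conditioned on $\{V\ne0\}$. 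Taking $p$-th roots reproduces \eqref{eq:ros-Cp-mix}. The same step also carries over the integrability hypotheses of Theorem~\ref{thm:ros-mix}: $V\in L_{p+\delta}$ for some $\delta>0$ when $2<p<4$, and $V\in L_p$ when $p\ge4$ (without which ${\bf C}_{p,V}=\infty$).

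I do not anticipate any genuine obstacle here: all the substance resides in Theorem~\ref{thm:ros-mix}, and what remains is essentially bookkeeping. The only points deserving (minor) care are verifying that the rescaling $X\mapsto X/M(X)$ keeps each coordinate a $V$-mixture while preserving the ratio $\n{\sum_jX_j}_p/M(X)$, that the renormalised feasible set is exactly $\bigcup_n\cU_V(n,p,1,1)$, and --- purely a matter of algebra --- that substituting $A=B=1$ into the right-hand sides of \eqref{eq:ros-mix-p<4}--\eqref{eq:ros-mix-p>4} and extracting $p$-th roots reproduces \eqref{eq:ros-Cp-mix} term by term.
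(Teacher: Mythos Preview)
Your proposal is correct and follows essentially the same route as the paper: the paper's proof also observes that by definition and homogeneity ${\bf C}_{p,V}^p$ equals the supremum over $\cU_V(n,p,1,1)$ and then applies Theorem~\ref{thm:ros-mix} with $A=B=1$. Your version simply spells out the homogeneity step in more detail (and makes explicit the inherited integrability hypotheses on $V$), but the substance is identical.
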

\begin{proof}
By its definition and homogeneity, ${\bf C}_{p,V}$ satisfies
\[
{\bf C}_{p,V}^p = \sup\left\{ \E\left|\sum_{j=1}^n X_j\right|^p: \ n\in\mathbb{N} \mbox{ and } X\in \cU_{V}(n, p, 1,1) \right\},
\]
and thus \eqref{eq:ros-Cp-mix} follows immediately from \eqref{eq:ros-mix-p<4} and \eqref{eq:ros-mix-p>4} with $A = B = 1$.
\end{proof}

Theorem \ref{thm:ros-mix} specialised to Gaussian mixtures gives the sharp constants in Rosenthal's inequality for sums of nonnegative random variables. This recovers and provides a new proof of the main results obtained independently in \cite{ISh2} by Ibragimov and Sharakhmetov and in \cite{Sch} by Schechtman which use different approaches: the former adapts Utev's arguments, whereas the latter reduces the problem to sums of independent Poisson random variables using convexity and then optimises over their parameters (in an ingenious way).

\begin{corollary}[\cite{ISh2, Sch}]\label{cor:ros-pos}
Let $A, B > 0$. We have
\begin{equation}\label{eq:ros-pos}
\sup\ \E\left(\sum_{j=1}^n X_j\right)^p = \begin{cases} A^p + B^p, & 1 < p < 2, \\
\left(\frac{B^p}{A}\right)^{\frac{p}{p-1}}\E\xi^p, & p \geq 2,
\end{cases}
\end{equation}
where $\xi$ is a Poisson random variable with parameter $\left(\frac{A}{B}\right)^{\frac{p}{p-1}}$ and the supremum is taken over all $n \geq 1$ and all sequences $(X_1, \dots, X_n)$ of independent, nonnegative random variables with
\[
\sum_{j=1}^n \E X_j \leq A \qquad\text{and}\qquad \sum_{j=1}^n \E X_j^p \leq B^p.
\]
\end{corollary}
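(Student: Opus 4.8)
The plan is to obtain Corollary \ref{cor:ros-pos} from Theorem \ref{thm:ros-mix} applied to \emph{Gaussian mixtures}, exploiting the elementary fact that a weighted sum of independent Gaussians is again a Gaussian mixture. Concretely, I would invoke Theorem \ref{thm:ros-mix} with $V=Z$ a standard Gaussian and with its exponent taken to be $2p$, where $p>1$ is the exponent appearing in the corollary. Since $Z\in L_r$ for every $r>0$ and $\p{Z=0}=0$, both integrability hypotheses of the theorem are automatically satisfied, and the regimes $2<2p<4$ and $2p\ge4$ correspond exactly to $1<p<2$ and $p\ge2$.

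The first step is to set up a dictionary between nonnegative summands and Gaussian mixtures. Given independent nonnegative random variables $X_1,\dots,X_n$, let $Z_1,\dots,Z_n$ be i.i.d. standard Gaussians independent of them and put $G_j=X_j^{1/2}Z_j$; then $G_1,\dots,G_n$ are independent Gaussian mixtures, and conversely every tuple of independent Gaussian mixtures arises this way (from a representation $G_j\stackrel{d}{=}R_jZ_j$, take $X_j=R_j^2$). The moments correspond via $\E G_j^2=\E X_j$ and $\n{G_j}_{2p}^{2p}=\E|Z|^{2p}\,\E X_j^{p}=\n{Z}_{2p}^{2p}\,\E X_j^{p}$, so the constraints $\sum_j\E X_j\le A$ and $\sum_j\E X_j^{p}\le B^{p}$ are equivalent to the requirement $(G_1,\dots,G_n)\in\cU_Z\big(n,2p,A^{1/2},\n{Z}_{2p}B^{1/2}\big)$. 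Moreover, conditionally on $(X_j)_j$ the sum $\sum_j G_j$ is a centred Gaussian with variance $\sum_j X_j$, hence $\sum_j G_j\stackrel{d}{=}\big(\sum_j X_j\big)^{1/2}Z$ and consequently $\E\big|\sum_{j=1}^n G_j\big|^{2p}=\n{Z}_{2p}^{2p}\,\E\big(\sum_{j=1}^n X_j\big)^{p}$. As the correspondence $(X_j)\mapsto(G_j)$ is onto the class of tuples of independent Gaussian mixtures and matches moments exactly, taking suprema shows that the left-hand side of \eqref{eq:ros-pos} equals $\n{Z}_{2p}^{-2p}$ times the supremum in Theorem \ref{thm:ros-mix} evaluated at exponent $2p$ with parameters $A_{\mathrm{thm}}=A^{1/2}$ and $B_{\mathrm{thm}}=\n{Z}_{2p}B^{1/2}$.

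It then remains to substitute these parameters into the right-hand sides of \eqref{eq:ros-mix-p<4} and \eqref{eq:ros-mix-p>4}. For $1<p<2$ one gets $\n{Z}_{2p}^{-2p}\big(B_{\mathrm{thm}}^{2p}+\n{Z}_{2p}^{2p}A_{\mathrm{thm}}^{2p}\big)=A^{p}+B^{p}$ at once. For $p\ge2$ one uses that here $\tilde V_j=Z_j$ (since $\p{Z=0}=0$) and that, conditionally on $\xi$, $\sum_{j=1}^\xi Z_j\stackrel{d}{=}\sqrt\xi\,Z$, so $\E\big|\sum_{j=1}^\xi \tilde V_j\big|^{2p}=\n{Z}_{2p}^{2p}\,\E\xi^{p}$; plugging in $A_{\mathrm{thm}}$, $B_{\mathrm{thm}}$ together with $\n{Z}_2=1$ collapses the prefactor $\big(\frac{B_{\mathrm{thm}}^{2p}}{A_{\mathrm{thm}}^{2}}\frac{\n{Z}_2^2}{\n{Z}_{2p}^{2p}}\big)^{\frac{2p}{2p-2}}$ to $\big(\frac{B^{p}}{A}\big)^{\frac{p}{p-1}}$ and the Poisson parameter $\big(\frac{A_{\mathrm{thm}}}{B_{\mathrm{thm}}}\frac{\n{Z}_{2p}}{\n{Z}_2}\big)^{\frac{4p}{2p-2}}$ to $\big(\frac{A}{B}\big)^{\frac{p}{p-1}}$, which is precisely \eqref{eq:ros-pos}. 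There is no substantive obstacle here beyond this bookkeeping of exponents; the only points worth stating explicitly are that the correspondence $X_j\leftrightarrow G_j$ is a surjection onto tuples of independent Gaussian mixtures preserving the moment constraints exactly, so that the two suprema are genuinely equal rather than merely comparable, and that the two regimes of Theorem \ref{thm:ros-mix} rely on different integrability of $V$ — harmless for $V=Z$.
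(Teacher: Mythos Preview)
Your proposal is correct and follows exactly the paper's approach: pass from nonnegative summands $X_j$ to the Gaussian mixtures $\sqrt{X_j}Z_j$, apply Theorem~\ref{thm:ros-mix} with $V=Z$ at exponent $2p$, and use that $\sum_{j=1}^\xi Z_j\stackrel{d}{=}\sqrt{\xi}\,Z$. The paper compresses all of this into a single sentence, whereas you have (correctly) spelled out the moment bookkeeping and the surjectivity of the correspondence needed to equate the two suprema.
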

\begin{proof}
For a fixed sequence of independent positive random variables $(X_1, \dots, X_n)$, we consider the sequence $(\sqrt{X_1}Z_1, \dots, \sqrt{X_n}Z_n)$ of independent Gaussian mixtures and apply Theorem \ref{thm:ros-mix} (we note that $\sum_{j=1}^\xi Z_j$ has the same distribution as $\sqrt{\xi}Z_1$).
\end{proof}

{\purp Our main theorem also immediately yields natural complex counterparts, when symmetry is replaced with rotational invariance on the complex plane. We say that a $\C$-valued random variable $X$ is rotationally invariant (sometimes referred to as Reinhard-symmetric or circled), if $e^{it}X$ has the same distribution as $X$, for every $t \in \R$. Equivalently, $X$ has the same distribution as $|X|\eta$, where $\eta$ is a Steinhaus random variable (uniform on the unit circle $\{z \in \C, \ |z| = 1\}$), independent of $X$.  For brevity but still  illustrating the main point, we only state an analogue of \eqref{eq:ros-Cp}.

\begin{corollary}\label{cor:complex}
The best constant ${\bf C}_{p, \C}$ such that for any sequence $X_1, X_2, \dots$ of $\C$-valued independent rotationally invariant random variables and any $n \geq 1$, we have the inequality
\[
\left(\E\left|\sum_{j=1}^n X_j\right|^p\right)^{1/p} \leq {\bf C}_{p, \C} \max\left\{\left(\sum_{j=1}^n \E|X_j|^2\right)^{1/2}, \left(\sum_{j=1}^n \E|X_j|^p\right)^{1/p}\right\}
\]
reads
\[
{\bf C}_{p, \C} = \begin{cases} \left(1 + \beta_p2^{-p/2}\E|Z|^p\right)^{1/p}, & 2 < p < 4, \\
\beta_p^{1/p}\left\|\sum_{j=1}^\xi \cos(2\pi U_j)\right\|_p, & p \geq 4, \end{cases}
\]
where $Z$ is a standard Gaussian random variable, $U_1, U_2, \dots$ are i.i.d. random variables uniform on $[0,1]$, $\xi$ is an independent Poisson random variable with parameter $1$ and $\beta_p:=1/\E|\cos(2\pi U_1)|^p = \frac{\sqrt{\pi}\Gamma\left(\frac{p+2}{2}\right)}{\Gamma\left(\frac{p+1}{2}\right)}$.
\end{corollary}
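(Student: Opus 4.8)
The plan is to reduce Corollary~\ref{cor:complex} to Theorem~\ref{thm:ros-mix} applied to the bounded real symmetric random variable $V=\cos(2\pi U_1)$, exploiting the fact that, at the level of distributions, a $\C$-valued rotationally invariant random variable is nothing but a real $V$-mixture for this particular $V$. First I would record two elementary facts. (i) A sum of independent rotationally invariant $\C$-valued random variables is again rotationally invariant: for each $t\in\R$, multiplying each summand by $e^{it}$ leaves the joint law of the summands, hence the law of the sum, unchanged. (ii) For \emph{any} rotationally invariant $\C$-valued $Y$ one has $Y\stackrel{d}{=}|Y|\,e^{2\pi i U}$ with $U$ uniform on $[0,1]$ and independent of $|Y|$ (conditionally on $|Y|$, the argument of $Y$ has a rotation-invariant law on the circle, hence is uniform), so $\mathrm{Re}(Y)\stackrel{d}{=}|Y|\cos(2\pi U)$ and, by independence,
\[
\E|Y|^p=\frac{\E|\mathrm{Re}(Y)|^p}{\E|\cos(2\pi U_1)|^p}=\beta_p\,\E|\mathrm{Re}(Y)|^p,\qquad \E|Y|^2=2\,\E|\mathrm{Re}(Y)|^2.
\]

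Next I would apply these to the summands $X_j$ and to $Y=\sum_{j=1}^n X_j$. Writing $Y_j:=\mathrm{Re}(X_j)$, the variables $Y_1,\dots,Y_n$ are independent real $V$-mixtures with $V=\cos(2\pi U_1)$; moreover $\sum_j Y_j=\mathrm{Re}(Y)$, so by (i) and (ii) one has $\E\left|\sum_j X_j\right|^p=\beta_p\,\E\left|\sum_j Y_j\right|^p$, and the constraints $\sum_j\E|X_j|^2\le1$, $\sum_j\E|X_j|^p\le1$ become $\sum_j\E Y_j^2\le\tfrac12$, $\sum_j\E|Y_j|^p\le\beta_p^{-1}$. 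Conversely, a tuple of independent $V$-mixtures $Y_j\stackrel{d}{=}R_j\cos(2\pi U_j)$ (with $R_j\ge0$ independent of $U_j$, all mutually independent) is realised by $X_j:=R_je^{2\pi i U_j}$, which are independent, rotationally invariant, with $\mathrm{Re}(X_j)\stackrel{d}{=}Y_j$; hence the correspondence is a bijection between the two families of admissible laws and, by homogeneity,
\[
{\bf C}_{p,\C}^p=\beta_p\cdot\sup\left\{\E\left|\sum_{j=1}^n Y_j\right|^p:\ n\in\mathbb{N},\ Y\in\cU_V\!\left(n,p,2^{-1/2},\beta_p^{-1/p}\right)\right\},
\]
where $V=\cos(2\pi U_1)$ is bounded, hence in every $L_q$, so Theorem~\ref{thm:ros-mix} applies.

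Finally I would feed the moments of $V=\cos(2\pi U_1)$ into Theorem~\ref{thm:ros-mix}: one has $\n{V}_2^2=\tfrac12$, $\n{V}_p^p=\beta_p^{-1}$ and $\p{V=0}=0$, so $\tilde V_j=V_j$. For $2<p<4$, \eqref{eq:ros-mix-p<4} with $A=2^{-1/2}$, $B=\beta_p^{-1/p}$ gives supremum $\beta_p^{-1}+\n{Z}_p^p\,2^{-p/2}$, and multiplication by $\beta_p$ yields $1+\beta_p2^{-p/2}\E|Z|^p$. For $p\ge4$ the key observation is that the prefactor and the Poisson parameter in \eqref{eq:ros-mix-p>4} both collapse to $1$, since $\frac{B^p}{A^2}\frac{\n{V}_2^2}{\n{V}_p^p}=\frac{\beta_p^{-1}}{1/2}\cdot\frac{1/2}{\beta_p^{-1}}=1$ and $\left(\frac{A}{B}\frac{\n{V}_p}{\n{V}_2}\right)^{\frac{2p}{p-2}}=1$; thus \eqref{eq:ros-mix-p>4} reduces to $\E\left|\sum_{j=1}^\xi\cos(2\pi U_j)\right|^p$ with $\xi$ Poisson of parameter $1$, and multiplying by $\beta_p$ and taking $p$-th roots gives $\beta_p^{1/p}\n{\sum_{j=1}^\xi\cos(2\pi U_j)}_p$. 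The closed form $\beta_p=\sqrt{\pi}\,\Gamma(\tfrac{p+2}{2})/\Gamma(\tfrac{p+1}{2})$ is the standard Wallis-integral evaluation of $\E|\cos(2\pi U_1)|^p=\tfrac1\pi\int_0^\pi|\cos\theta|^p\,\dd\theta$. I do not expect a genuine obstacle here beyond carefully tracking the two rescaling factors ($2$ and $\beta_p$) as they pass through the constraints and the objective; once the reduction to Theorem~\ref{thm:ros-mix} is set up, everything is a direct substitution.
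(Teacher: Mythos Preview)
Your proposal is correct and follows essentially the same route as the paper: reduce to real $V$-mixtures with $V=\cos(2\pi U_1)$ via the identity $\E|Y|^p=\beta_p\,\E|\mathrm{Re}(Y)|^p$ for rotationally invariant $Y$, then apply Theorem~\ref{thm:ros-mix} with $A=2^{-1/2}$, $B=\beta_p^{-1/p}$ and read off the constants. The paper attributes the projection identity to K\"onig--Kwapie\'n, whereas you derive it directly from (i) and (ii); this is the same observation, and your explicit check that the bijection between rotationally invariant tuples and $V$-mixture tuples goes both ways is a welcome detail the paper leaves implicit.
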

\begin{proof}
Let $\eta_1, \eta_2, \dots$ be i.i.d. Steinhaus random variables, say $\eta_j = e^{2\pi i U_j}$ for i.i.d. uniform $[0,1]$ random variables $U_j$. The key is the observation that thanks to rotational invariance, complex moments are proportional to real moments of one-dimensional projections, that is
\[
\E\left|\sum_{j=1}^n X_j\right|^p = \E\left|\sum_{j=1}^n |X_j|\eta_j\right|^p = \beta_p \E\left|\sum_{j=1}^n |X_j|\cos(2\pi U_j)\right|^p
\]
%with $\beta_p = 1/\E|\cos(2\pi U_1)|^p = \frac{\sqrt{\pi}\Gamma\left(\frac{p+2}{2}\right)}{\Gamma\left(\frac{p+1}{2}\right)}$. 
This goes back to K\"onig and Kwapie\'n's paper \cite{KK} (see Lemma 8 therein). Thus we apply Theorem \ref{thm:ros-mix} to the mixtures of $\cos(2\pi U_1)$ and constraints $A = 1/\sqrt{2}$, $B = \beta_p^{-1/p}$ and the assertion follows.
\end{proof}
}

%------------------------------------------------------------

\subsection{Log-concave random variables}

An important step in Utev's approach leading to the optimal Rosenthal constant \eqref{eq:ros-Cp} for $p > 4$ is a reduction to $3$-point distributions: given $n$ and positive sequences $a, b$, it turns out that $\sup_{X \in \cM(n,p,a,b)} \E\left|\sum_{j=1}^n X_j\right|^p$ is attained at the (uniquely determined) $3$-point distribution which attains the moment constraints as equalities (for details, see Theorem \ref{thm:Utev-3point} in the next section). Our main result here, put informally, is a log-concave analogue of this. Recall that a random variable $X$ is log-concave if it has a density of the form $e^{-\phi}$ for a convex function $\phi\colon \R\to (-\infty,+\infty]$. Log-concave distributions arise naturally in convex geometry and geometric functional analysis (see e.g. \cite{AGM, BGVV}). To state our theorem rigorously, we first need to describe the relevant extremal log-concave distributions (whose convex potentials $\phi$ turn out to be piecewise linear with at most two pieces on the positive semiaxis).

For two parameters $\alpha \in [0,+\infty)$ and $\gamma \in (0,+\infty]$, we define the log-concave density
\begin{equation}
f_{\alpha,\gamma}(x) = \frac{1}{2(\alpha+1/\gamma)}\exp\left\{-\gamma(|x|-\alpha)_+\right\}
\end{equation}
with the convention that $\gamma = +\infty$, $\alpha > 0$ gives the uniform density on $[-\alpha,\alpha]$ and that $\alpha = 0$, \mbox{$\gamma < +\infty$} gives the two-sided exponential density\mbox{ $\frac{\gamma}{2}e^{-\gamma|x|}$. For $\alpha \in (0,+\infty]$ and $\gamma \in [0,+\infty)$,  let}
\begin{equation}
g_{\alpha,\gamma}(x) = \frac{\gamma}{2(1-e^{-\alpha\gamma})}\exp\left\{-\gamma|x|\right\}\1_{\{|x| \leq \alpha\}}
\end{equation}
with the convention that $\gamma = 0$, $\alpha > 0$ gives the uniform density on $[-\alpha,\alpha]$ and that $\alpha = +\infty$, $\gamma > 0$ gives the two-sided exponential density $\frac{\gamma}{2}e^{-\gamma|x|}$.
We set
\begin{equation}
\cF^- = \big\{f_{\alpha,\gamma}:\ \alpha \in [0,+\infty), \gamma \in (0,+\infty]\big\}
\end{equation}
and
\begin{equation}
\cF^+ = \big\{g_{\alpha,\gamma}:\ \alpha \in (0,+\infty], \gamma \in [0,+\infty)\big\}
\end{equation}
to be the two-parameter family of such densities. The following lemma describes the set of feasible moment parameters. We defer its simple but technical proof to {\red Section \ref{sec:proofs-logc}}.

\begin{lemma}\label{lm:uniq-f}
Let $p > 2$. For every $a, b > 0$ such that
\begin{equation}\label{eq:feas}
3^{1/2}(p+1)^{-1/p} \leq \frac{b}{a} \leq 2^{-1/2}\Gamma(p+1)^{1/p},
\end{equation}
there exist unique $f^- \in \cF^-$ and $f^+ \in \cF^+$ with 
\begin{equation}
\int_{\R} x^2f^\pm(x)\dd x = a^2 \quad \text{and} \quad \int_{\R} |x|^pf^\pm(x)\dd x = b^p.
\end{equation}
Conversely, the second and $p$-th moment of any symmetric log-concave variable satisfy \eqref{eq:feas}.
\end{lemma}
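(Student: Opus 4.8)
The plan is to treat the two assertions of Lemma~\ref{lm:uniq-f} separately --- the parametrisation of $\cF^{\pm}$ by feasible moment data, and the necessity of \eqref{eq:feas} --- and throughout to exploit dilation homogeneity. If $X$ has density $f$, then $cX$ has density $c^{-1}f(\cdot/c)$ and $\|cX\|_q=c\|X\|_q$; moreover the $c$-dilate of $f_{\alpha,\gamma}$ is $f_{c\alpha,\gamma/c}$ and that of $g_{\alpha,\gamma}$ is $g_{c\alpha,\gamma/c}$. Thus each of $\cF^-,\cF^+$ is a union of one-parameter dilation orbits, indexed by the scale-invariant quantity $t=\alpha\gamma\in[0,+\infty]$, where for $\cF^-$ the value $t=0$ is the two-sided exponential shape and $t=+\infty$ the uniform shape (and the other way around for $\cF^+$). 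Within the orbit of a fixed shape $t$ the second moment is strictly increasing in the scale and sweeps all of $(0,+\infty)$, so there is a unique representative $f^{\mp}_t$ with $\int x^2 f^{\mp}_t=a^2$; what then governs everything is the scale-invariant ratio $R^{\mp}(t):=\bigl(\int|x|^p f^{\mp}_t\bigr)^{1/p}/a$, and the existence/uniqueness of $f^{\mp}$ for given $(a,b)$ is equivalent to $R^{\mp}$ being a bijection of $[0,+\infty]$ onto the interval in \eqref{eq:feas}.

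For the first assertion I would compute $R^{\mp}$ explicitly. Normalising the scale by $\gamma=1$ in $\cF^-$ (representative $\tfrac{1}{2(t+1)}e^{-(|x|-t)_+}$) and by $\alpha=1$ in $\cF^+$, the second and $p$-th moments become elementary functions of $t$ (a polynomial part plus an incomplete-Gamma part), from which $R^{\mp}$ is read off. Letting $t\to0$ and $t\to+\infty$ identifies the endpoints: the two-sided exponential gives $R=2^{-1/2}\Gamma(p+1)^{1/p}$ and the uniform density gives $R=3^{1/2}(p+1)^{-1/p}$. The only substantive point is that $R^{\mp}$ is \emph{strictly} monotone on $[0,+\infty]$ (decreasing for $\cF^-$, increasing for $\cF^+$); I would establish this by differentiating the explicit formula and checking the sign of the derivative --- elementary but the tedious part of the argument, and the one genuinely needed for uniqueness. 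Granting it, $R^{\mp}$ is a homeomorphism of $[0,+\infty]$ onto $\bigl[3^{1/2}(p+1)^{-1/p},\,2^{-1/2}\Gamma(p+1)^{1/p}\bigr]$, and together with the scaling freedom (which independently pins down $a$) this produces, for every $(a,b)$ satisfying \eqref{eq:feas}, a unique shape $t$ with $R^{\mp}(t)=b/a$ and hence a unique $f^{\mp}$.

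For the converse --- that any symmetric log-concave $X$ obeys \eqref{eq:feas} --- I would argue by a sign-change comparison with the two extremal densities. For the \emph{lower} bound, compare $f$ with the uniform density $u$ on $[-c,c]$ where $c^2=3\,\E X^2$, so that $\sigma:=f-u$, restricted to $[0,\infty)$, is orthogonal to $1$ and to $x^2$; since $f$ is non-increasing on $[0,\infty)$ and $u$ is a box, $\sigma$ has at most two sign changes, while a nonzero function orthogonal to $1$ and $x^2$ has at least two --- so exactly two, necessarily with pattern $+,-,+$. Picking reals $\lambda<0<\kappa$ so that $g(x):=x^p-\kappa x^2-\lambda$ vanishes at the two sign-change points $r_1<r_2$, the shape of $g$ (positive at $0$, decreasing then increasing on $(0,\infty)$) forces $g$ to have the very same sign pattern $+,-,+$, whence $\int_0^\infty x^p\sigma=\int_0^\infty g\,\sigma\ge0$, i.e. $\E|X|^p\ge\E|U|^p=(3\,\E X^2)^{p/2}/(p+1)$. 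For the \emph{upper} bound, compare instead with the two-sided exponential $v$ of the same second moment: now $\log(f/v)$ is concave on $[0,\infty)$ (the difference of the concave $\log f$ and the affine $\log v$), so if $\sigma:=f-v$ has two sign changes its pattern is necessarily $-,+,-$; the same orthogonality argument shows it has exactly two, and the same $g$ now yields $\int_0^\infty x^p\sigma\le0$, i.e. $\E|X|^p\le\E|V|^p=\Gamma(p+1)(\E X^2/2)^{p/2}$.

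I expect the two steps requiring real care to be: (i) the strict monotonicity of $R^{\mp}$, a somewhat lengthy but routine calculus check on an explicit function; and (ii) the bookkeeping in the converse --- verifying the sign patterns of $\sigma$ and of $g$ cleanly (handling the boundary of $\mathrm{supp}\,f$ and the degenerate cases $f=u$, $f=v$), and confirming that concavity of $\log(f/v)$ together with the two moment constraints really pins the pattern to $-,+,-$ rather than $+,-,+$. Neither point is conceptually deep, which matches the statement's description of the proof as ``simple but technical''; alternatively, the bounds in \eqref{eq:feas} are exactly the $q=2$ instance of the sharp comparison of moments of symmetric log-concave random variables, extremised by the uniform and the two-sided exponential, and could be quoted from \cite{AGM}.
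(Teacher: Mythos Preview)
Your proposal is correct and largely parallels the paper's argument, but with two notable differences worth highlighting. For \emph{uniqueness}, you plan to verify strict monotonicity of the ratio $R^{\mp}(t)$ by differentiating the explicit moment formulae --- this works, but is the ``tedious'' part you anticipate. The paper sidesteps this computation entirely: it observes that any two distinct densities in $\cF^-$ (or $\cF^+$) have a difference with at most two sign changes on $(0,\infty)$, since the difference of their log-densities is piecewise linear with a constant initial segment and at most two further linear pieces. If two such densities shared both the second and $p$-th moments, their difference would be orthogonal to $1$, $x^2$, $x^p$ on $(0,\infty)$ and hence would need at least three sign changes --- a contradiction. This is exactly the interlacing mechanism you already invoke for the converse, so you may as well reuse it here and avoid the derivative check altogether; existence then only needs continuity and the intermediate value theorem, not monotonicity.

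For the \emph{converse}, the paper simply cites \cite[Remark~14]{ENT} and \cite[Proposition~5.5]{GNT}, whereas you sketch the interlacing proof directly. Your sketch is essentially the argument contained in those references, so this is a matter of presentation rather than method. Your caution about pinning down the sign pattern (e.g.\ $-,+,-$ versus $+,-,+$ in the exponential comparison) is well placed: the clean way to resolve it is to note that the tail behaviour forces the sign of $\sigma$ near the right end of the support, and the moment constraints then exclude the wrong pattern. In short, your route is sound; the paper's only genuine shortcut is replacing your monotonicity calculation by the same sign-change trick you already have in hand.
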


With notation set up, the main theorem of this section is the solution of a Rosenthal-type extremal problem for symmetric log-concave random variables with all moments prescribed.

\begin{theorem}\label{thm:logc-ab}
Fix $p > 4$, $n\geq1$ and let $a_1, \dots, a_n, b_1, \dots, b_n > 0$ be such that each ratio $b_j/a_j$ satisfies \eqref{eq:feas}. Then, we have
\begin{equation} \label{eq:coinc}
\inf\ \E\left|\sum_{j=1}^n X_j\right|^p = \E\left|\sum_{j=1}^n {\bf X}_j^-\right|^p \qquad \mbox{and} \qquad \sup\ \E\left|\sum_{j=1}^n X_j\right|^p = \E\left|\sum_{j=1}^n {\bf X}_j^+\right|^p,
\end{equation}
where the infimum (respectively supremum) is taken over all sequences $X = (X_1,\dots,X_n)$ of independent symmetric log-concave random variables $X_1,\ldots,X_n$ with $\E X_j^2 = a_j^2$ and $\E|X_j|^p = b_j^p$ and the ${\bf X}_j^-$ (resp.~${\bf X}_j^+$) have the unique densities from\mbox{ $\cF^-$ (resp.~$\cF^+$) that satisfy the same constraints.}
\end{theorem}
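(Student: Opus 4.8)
The plan is to reduce the $n$-dimensional problem to a one-dimensional localization statement and then analyze the one-variable extremal problem directly. First I would fix all coordinates except the $j$-th and view the quantity $\E\bigl|\sum_{i=1}^n X_i\bigr|^p$ as a functional of the law $\mu_j$ of $X_j$, of the form $\mu_j \mapsto \int \Phi(x)\,\dd\mu_j(x)$, where $\Phi(x) = \E\bigl|x + \sum_{i\neq j} X_i\bigr|^p$ is an \emph{even, convex} function of $x$ (convexity in $x$ is immediate since $t\mapsto|t|^p$ is convex and we are averaging translates). We want to optimize this linear functional of $\mu_j$ over the set of symmetric log-concave densities subject to the two moment constraints $\int x^2\,\dd\mu_j = a_j^2$ and $\int|x|^p\,\dd\mu_j = b_j^p$. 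This is exactly the situation handled by the refined localization principle of \cite{ENT}: within the class of symmetric log-concave measures with two prescribed moments, the extremizers of $\int \Phi\,\dd\mu$ for $\Phi$ convex (or concave) are the distinguished two-parameter densities whose log-density is piecewise linear with at most two pieces on $[0,\infty)$ — precisely the families $\cF^-$ and $\cF^+$ introduced above. One direction (minimum) is attained on $\cF^-$ and the other (maximum) on $\cF^+$; which is which is pinned down by the sign of the relevant derivative/curvature data, consistent with the labeling in Lemma \ref{lm:uniq-f}. The hypothesis $b_j/a_j$ satisfies \eqref{eq:feas} guarantees (via Lemma \ref{lm:uniq-f}) that the target densities ${\bf X}_j^\pm$ actually exist and are unique, so the coordinate-wise optimization is well posed.

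Second, I would run this coordinatewise replacement as a swapping/interpolation argument: starting from an arbitrary admissible sequence $(X_1,\dots,X_n)$, replace $X_1$ by ${\bf X}_1^+$ (which does not decrease $\E|\sum X_i|^p$ by the one-dimensional statement, since the conditional functional $\Phi$ is convex), then replace $X_2$ by ${\bf X}_2^+$ with the other coordinates now partly updated, and so on; after $n$ steps we reach $({\bf X}_1^+,\dots,{\bf X}_n^+)$ with a value no smaller than the original, which gives the supremum half of \eqref{eq:coinc}. The infimum half is identical with $\cF^-$ in place of $\cF^+$ and the inequality reversed. A point needing a little care is measurability/continuity so that $\Phi$ is genuinely finite and the localization lemma applies: here one uses that all $X_i \in L_p$ and that $p$-th moments of log-concave variables with bounded second moment are controlled (again Lemma \ref{lm:uniq-f}, the converse statement), so $\Phi$ is finite, even, convex, and continuous, and the extremal problem $\sup/\inf \int\Phi\,\dd\mu_j$ over the two-moment log-concave class is attained.

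The main obstacle I anticipate is verifying that the localization principle of \cite{ENT} applies \emph{with two simultaneous moment constraints} rather than one, and correctly identifying that the two-piece log-affine extremizers are exactly the members of $\cF^-$ and $\cF^+$ with the convention boundary cases (uniform, two-sided exponential) included. In the classical single-constraint localization one gets a one-parameter extremal family; pinning both moments forces a second parameter, and one must check that the dimension count works out so that the extremizer is uniquely the $f_{\alpha,\gamma}$ or $g_{\alpha,\gamma}$ solving the two moment equations, with no extra degeneracy. I expect this to be handled by the effective refinement from \cite{ENT} essentially as a black box, but matching its conclusion to the precise parametrization here, and checking that convexity of $\Phi$ (as opposed to a more general ``admissible'' shape) selects $\cF^+$ for the sup and $\cF^-$ for the inf, is the delicate accounting step. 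Everything else — the coordinatewise swap, finiteness of moments, the reduction to a linear functional — is routine.
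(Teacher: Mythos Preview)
Your overall strategy is the same as the paper's: freeze all coordinates but the $j$-th, reduce to the pointwise inequality $\E|X_j+z|^p \geq \E|{\bf X}_j^-+z|^p$ for every real $z$, and then swap one variable at a time.

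The gap is in the one-dimensional step. You want to invoke \cite{ENT} as a black box asserting that, for any even convex $\Phi$, the extremisers of $\int\Phi\,\dd\mu$ over symmetric log-concave $\mu$ with prescribed $2$nd and $p$th moments lie in $\cF^\pm$. No such statement is available in \cite{ENT}; that paper treats only moment functionals $\Phi(x)=|x|^q$, and what it supplies here is the \emph{technique} (interlacing densities) together with the fact that $f_j-{\bf f}_j^-$ must change sign at least three times on $(0,\infty)$. Applying the technique to $\phi(x)=|x+z|^p+|x-z|^p$ is the actual new content of the proof: one chooses $\alpha,\beta,\gamma$ so that $h(x)=\phi(x)-\alpha-\beta x^2-\gamma x^p$ vanishes at the three sign-change points of $f_j-{\bf f}_j^-$, and then must show that $h$ has \emph{no further} roots and carries the matching signature $+,-,+,-$. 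This is Lemma~\ref{lm:sign}, whose proof hinges on the strict convexity of $\psi_p(x)=|\sqrt x+1|^p+|\sqrt x-1|^p-2x^{p/2}$ (Lemma~\ref{lm:psi_p}); that convexity forces the coefficient $\gamma$ to exceed $2$, which in turn controls the signs of $h,h',h'',h'''$ at infinity and bounds the root count. Both lemmas use the specific form of $\phi$ and the hypothesis $p>4$ in an essential way; bare convexity of $\Phi$ does not determine which of $\cF^+,\cF^-$ is the maximiser versus the minimiser. So what you flag as ``the delicate accounting step'' is in fact the main analytic content of the theorem, not something extractable from \cite{ENT} as a black box.
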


%------------------------------------------------------------

\subsection{Random variables with log-concave tails} A symmetric random variable $X$ is said to have log-concave tails if $T_X(t) = \mathbb{P}(|X|>t)$ is log-concave on $[0,\infty)$. Every log-concave random variable has log-concave tails but the converse is not true, e.g.~for Rademacher variables.  A modification of the proof of Theorem \ref{thm:logc-ab} allows us to also resolve the corresponding Rosenthal-type extremal problems for variables with log-concave tails. Consider the classes of distributions
\begin{equation}
\cG^- = \big\{X: \ T_X(t) = e^{-a(t-b)_+} \ \mbox{for some } a,b\geq0 \big\}
\end{equation}
and
\begin{equation}
\cG^+ = \big\{X: \ T_X(t) = e^{-at}{\bf 1}_{[0,b]}(t) \ \mbox{for some } a,b\geq0 \big\}.
\end{equation}

\begin{theorem}\label{thm:log-tail}
Fix $p > 4$, $n\geq1$ and let $a_1, \dots, a_n, b_1, \dots, b_n > 0$ be feasible sequences of second and $p$-th moments of symmetric random variables with log-concave tails. Then, we have
\begin{equation}
\inf\ \E\left|\sum_{j=1}^n X_j\right|^p = \E\left|\sum_{j=1}^n {\bf X}_j^-\right|^p \qquad \mbox{and} \qquad \sup\ \E\left|\sum_{j=1}^n X_j\right|^p = \E\left|\sum_{j=1}^n {\bf X}_j^+\right|^p,
\end{equation}
where the infimum and supremum are taken over all sequences $X = (X_1,\dots,X_n)$ of independent symmetric random variables $X_1,\ldots,X_n$ with log-concave tails, $\E X_j^2 = a_j^2$ and $\E|X_j|^p = b_j^p$ and the ${\bf X}_j^-$ (resp.~${\bf X}_j^+$) are the unique laws in\mbox{ $\cG^-$ (resp.~$\cG^+$) that satisfy the same constraints.}
\end{theorem}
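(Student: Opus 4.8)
\textbf{Proof plan for Theorem \ref{thm:log-tail}.}
The plan is to mirror the proof of Theorem \ref{thm:logc-ab} but working at the level of tail functions rather than densities. The key structural fact is that for a symmetric random variable $X$ with log-concave tail $T_X$, and for the convex functional $\Phi(X) = \E|\sum_{j} X_j|^p$ (with all other summands frozen), the extremal $X_j$ subject to the two moment constraints $\E X_j^2 = a_j^2$, $\E|X_j|^p = b_j^p$ must have a tail function that is \emph{extremal} among log-concave functions on $[0,\infty)$ with two prescribed moments. First I would record the moment identities in tail form: $\E X^2 = 2\int_0^\infty t\, T_X(t)\,\dd t$ and $\E|X|^p = p\int_0^\infty t^{p-1} T_X(t)\,\dd t$, so that the constraints become linear functionals of $T_X$ on the convex cone of log-concave tails. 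Then I would invoke the refined localisation / Choquet-type principle from \cite{ENT} (the same tool used for Theorem \ref{thm:logc-ab}), which identifies the extreme points of the set of log-concave functions $T\colon[0,\infty)\to[0,1]$ with $T(0)=1$ and two linear moment constraints fixed: these are exactly the functions of the form $e^{-a(t-b)_+}$ (two linear pieces, the ``concave-tail'' extreme, giving $\cG^-$) and $e^{-at}\mathbf 1_{[0,b]}$ (truncated exponential, the ``convex-tail'' extreme, giving $\cG^+$). Because $X\mapsto \E|\sum_j X_j|^p$ is convex in the distribution of each $X_j$, it is maximised at an extreme point; a parity/sign argument (as in Theorem \ref{thm:logc-ab}) together with Lemma-type uniqueness should pin down which extreme gives the sup and which the inf.

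Concretely, the steps are: (i) translate feasibility and the moment constraints into the tail picture and state the analogue of Lemma \ref{lm:uniq-f}, namely that for each feasible pair $(a_j,b_j)$ there is a unique member of $\cG^-$ and a unique member of $\cG^+$ with those two moments — this is an elementary monotonicity computation with the explicit one-parameter families $e^{-a(t-b)_+}$ and $e^{-at}\mathbf 1_{[0,b]}$; (ii) fix all but one coordinate and show that the one-variable optimisation of $\E|\sum_j X_j|^p$ over symmetric log-concave-tailed $X_j$ with the two moments fixed is a linear program over a convex set whose extreme points are exactly $\cG^-\cup\cG^+$ (here is where \cite{ENT} enters, exactly as in the log-concave-density case); (iii) argue that the convex functional attains its maximum at an extreme point and its minimum at the ``opposite'' extreme, using that $e^{-a(t-b)_+}$ has heavier tails than $e^{-at}\mathbf 1_{[0,b]}$ for the same two moments, so that replacing a coordinate by its $\cG^+$-representative increases spread and hence the $p$-th moment of the sum, while replacing by the $\cG^-$-representative decreases it; (iv) iterate coordinate by coordinate (a standard swapping argument, each step not decreasing, resp.\ not increasing, the objective) to reach the fully extremal tuple $({\bf X}_1^\pm,\dots,{\bf X}_n^\pm)$; (v) check the boundary/degenerate cases $a=0$ or $b=\infty$ (Rademacher-type and exponential-type endpoints) by continuity.

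The main obstacle I anticipate is step (ii)–(iii): verifying that the extreme-point structure from \cite{ENT}, which is stated for log-concave \emph{densities}, transfers cleanly to log-concave \emph{tail functions} with the normalisation $T(0)=1$, and that the relevant functional is genuinely convex in each marginal in the tail parametrisation. Unlike the density case, $T_X$ is not a probability density (it integrates to $\E|X|$, which is itself not fixed), so the localisation lemma must be applied to the right normalised object; one likely needs to first reduce to the subclass where $T_X$ is strictly decreasing and then argue by approximation. A secondary technical point is confirming that, among pairs in $\cG^-$ versus $\cG^+$ with identical $(a,b)$, one family is ``more peaked'' in a sense (e.g.\ a single sign change of the difference of tails) strong enough to control the sign of the change in $\E|\sum_j X_j|^p$; this convexity-of-norm comparison is the crux, and it is exactly the tail analogue of the corresponding step for Theorem \ref{thm:logc-ab}, so I expect the same argument to carry over with the densities $f_{\alpha,\gamma}$, $g_{\alpha,\gamma}$ replaced by the tails $e^{-a(t-b)_+}$, $e^{-at}\mathbf 1_{[0,b]}$.
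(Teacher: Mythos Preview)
Your high-level scaffold --- freeze all but one coordinate, pass to tails, then swap one summand at a time --- is exactly what the paper does. The gap is in your steps (ii)--(iii): the set of log-concave tails with two prescribed moments is \emph{not} convex (a convex combination of log-concave functions need not be log-concave), so there is no ``linear program over a convex set whose extreme points are $\cG^-\cup\cG^+$'' to appeal to; and the objective is in fact \emph{linear} in each marginal, so even over a convex set both the sup and the inf would sit at extreme points and you would still need a separate comparison to decide which family gives which. The localisation language from \cite{FG1,FG2,ENT} can be made to say something here, but not via the convex-optimisation picture you sketch.

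The paper instead does directly the thing you relegate to a ``secondary technical point''. Writing $\phi(t)=|t+z|^p+|t-z|^p$ and integrating by parts,
\[
\E|X_j+z|^p-\E|{\bf X}_j^-+z|^p=\int_0^\infty\phi'(t)\big(T_{X_j}(t)-T_{{\bf X}_j^-}(t)\big)\,\dd t
=\int_0^\infty\big(\phi'(t)-\beta t-\gamma t^{p-1}\big)\big(T_{X_j}(t)-T_{{\bf X}_j^-}(t)\big)\,\dd t
\]
for any $\beta,\gamma$, since the two moment constraints give $\int t\,(T_{X_j}-T_{{\bf X}_j^-})=\int t^{p-1}(T_{X_j}-T_{{\bf X}_j^-})=0$. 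The tail difference changes sign exactly \emph{twice} on $(0,\infty)$ (not once): at least twice by orthogonality to the Chebyshev pair $t,t^{p-1}$, and at most twice because $\log T_{{\bf X}_j^-}(t)=-a(t-b)_+$ is zero on $[0,b]$ (where any log-concave $\log T\le 0$) and linear on $[b,\infty)$ (where a concave function crosses it at most twice); the signature is $-,+,-$. One then chooses $\beta,\gamma$ so that $\phi'(t)-\beta t-\gamma t^{p-1}$ vanishes at those two points, and the \emph{proof} of Lemma~\ref{lm:sign} (the analysis of $h',h'',h'''$, not the statement about $h$ itself) shows this bracket also has exactly those two sign changes with the same signature. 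The integrand is then pointwise nonnegative. For the supremum one runs the same argument with ${\bf X}_j^+\in\cG^+$ and the reversed signature. So the interlacing comparison is not a wrinkle to be checked at the end --- it \emph{is} the proof, and the Choquet/convexity layer should be dropped.
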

%------------------------------------------------------------
%------------------------------------------------------------

%\section{Proofs}

%------------------------------------------------------------

\section{Proof of Theorem \ref{thm:ros-mix}}

We first quickly show the identity \eqref{eq:ros-mix-p<4} for $2<p<4$ as it follows from the case of symmetric random variables (Rademacher mixtures) proven by Utev in \cite{U}. 

\begin{proof}[Proof of \eqref{eq:ros-mix-p<4}]
Since $\cM_V \subseteq \cM$ and $\cU_V \subseteq \cU$, thanks to the result for symmetric random variables (see \cite[Proposition~8.1]{FHJSZ} or equation (3) in \cite{ISh}), we immediately obtain that \eqref{eq:ros-mix-p<4} holds with ``$\leq$'' in place of ``$=$''. To argue that ``$\geq$'' holds as well, it suffices to modify the example from the proof of \cite[Proposition~8.1]{FHJSZ} by replacing the Rademacher random variables with i.i.d.~copies of $V$. For completeness, we now sketch this construction. Fix $0 < \alpha < A/\n{V}_2$ and set
\begin{equation}
X_j = \begin{cases} \frac{\alpha}{\sqrt{n}}V_j, & 1 \leq j \leq n, \\ \gamma \theta_jV_j, & n < j \leq 2n, \end{cases}
\end{equation}
where $V_1, \dots, V_{2n}$ are i.i.d.~copies of $V$, $\theta_{n+1}, \dots, \theta_{2n}$ are i.i.d. Bernoulli random variables with parameter $\lambda/n$, independent of the $V_j$, and $\gamma, \lambda > 0$ are parameters to be chosen soon. For $r > 0$,
\[
\sum_{j=1}^{2n} \n{X_j}_r^r = \|V\|_r^r(\alpha^rn^{1-r/2}+\gamma^r\lambda).
\]
Thus, to ensure that $X \in \cU_V(n,p,A,B)$, it suffices to choose $\gamma$ and $\lambda$ such that
\[
\gamma^2\lambda = \frac{A^2}{\|V\|_2^2} - \alpha^2 \qquad \text{and} \qquad \gamma^p\lambda = \frac{B^p}{\|V\|_p^p} - \alpha^pn^{1-p/2}
\]
which is clearly possible as long as $n$ is large enough. Using that for arbitrary independent symmetric random variables $S, T$, we have $\E|S+T|^p \geq \E|S|^p + \E|T|^p$, $p > 2$ (see \eqref{eq:ros-easy}, say), we get
%\begin{align*}
\[
\E\left|\sum_{j=1}^{2n} X_j\right|^p \geq \E\left|\sum_{j=1}^{n} X_j\right|^p + \sum_{j=n+1}^{2n} \E|X_j|^p = \alpha^p\E\left|\frac{\sum_{j=1}^nV_j}{\sqrt{n}}\right|^p + \gamma^p\lambda\|V\|_p^p.
\]%\end{align*}
The family $\left\{\left|\frac{\sum_{j=1}^nV_j}{\sqrt{n}}\right|^p\right\}_{n \geq 1}$ is uniformly integrable (since it is bounded in $L_{1+\delta/p}$, by \eqref{eq:ros}, say), so by the central limit theorem, $\E\left|\frac{\sum_{j=1}^nV_j}{\sqrt{n}}\right|^p \to \|Z\|_p^p\|V\|_2^p$ as $n \to \infty$. Moreover, $\gamma^p\lambda\|V\|_p^p = B^p - \alpha^pn^{1-p/2}\|V\|_p^p \to B^p$ as $n\to\infty$ and letting $\alpha \to A/\|V\|_2$ finishes the argument.
\end{proof}

The proof of \eqref{eq:ros-mix-p>4} requires some preparation. We first recall some of Utev's results. Central to his approach is the following Poissonisation estimate. For a finite nonnegative Borel measure $\nu$ on $\R$, we denote by $T_\nu$ a random variable with characteristic function 
\begin{equation}
\E e^{itT_\nu} = \exp\left\{\int_{\R}(e^{itx}-1)\dd\nu(x)\right\}, \qquad t \in \R.
\end{equation}
($T_\nu$ can be explicitly constructed as $\sum_{j=1}^\xi X_j$, where $X_1, X_2, \dots$ are i.i.d. copies of a random variable with law $\frac{1}{\nu(\R)}\nu$ and $\xi$ is an independent Poisson random variable with parameter $\nu(\R)$.) As customary, $\cB(\R)$ denotes the $\sigma$-algebra of all Borel sets in $\R$.

\begin{theorem}[Utev, Theorem 4 in \cite{U}]\label{thm:Utev-Poisson}
Let $\Phi\colon \R \to \R$ be an even $C^2$ function with $\Phi''$ convex. For every $n\in\mathbb{N}$ and independent symmetric random variables $X_1, \dots, X_n$, we have
\begin{equation}\label{eq:Utev-Poisson}
\E\Phi\left(\sum_{j=1}^n X_j\right) \leq \E\Phi(T_\nu)
\end{equation}
with $\nu$ defined by setting
\begin{equation}
\nu(\Gamma) = \sum_{j=1}^n \p{X_j \in \Gamma \setminus \{0\}}, \qquad \Gamma \in \cB(\R).
\end{equation}
\end{theorem}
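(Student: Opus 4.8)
The plan is to reduce the statement to the case $n=1$ by a telescoping argument, and then to prove that one-variable inequality by exhibiting a suitable convex function attached to a compound Poisson process. Since $\nu=\sum_{j=1}^n\nu_j$ with $\nu_j(\Gamma):=\p{X_j\in\Gamma\setminus\{0\}}$, comparing characteristic functions shows that $T_\nu$ has the law of $T_{\nu_1}+\dots+T_{\nu_n}$ for independent $T_{\nu_j}$, and each $T_{\nu_j}$ is symmetric because $\nu_j$ is. Hence it suffices to pass from $X_1+\dots+X_n$ to $T_{\nu_1}+\dots+T_{\nu_n}$ by replacing one summand at a time: in the $k$-th step one compares $S+X_k$ with $S+T_{\nu_k}$, where $S$ is an independent symmetric variable (a partial sum of the remaining $X_j$'s and the already replaced $T_{\nu_j}$'s), and the function $g(x):=\E\Phi(S+x)$ is again even, of class $C^2$, with $g''=\E\Phi''(S+\cdot)$ convex, being an average of the convex functions $x\mapsto\Phi''(S(\omega)+x)$. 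So everything comes down to the case $n=1$: for every even $C^2$ function $\Phi$ with $\Phi''$ convex and every symmetric $X$, one has $\E\Phi(X)\le\E\Phi(T_\nu)$, where now $\nu$ is the law of $X$ with the atom at $0$ removed. (We suppress throughout the mild integrability hypotheses needed to differentiate under the expectation and to expand $\E\Phi(T_\nu)$ as a series; the general case follows by a routine truncation, or is trivial when the right-hand side is infinite.)

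For $n=1$, put $\lambda:=\p{X\neq0}=\nu(\R)$ and let $Y,Y_1,Y_2,\dots$ be i.i.d.\ with law $\lambda^{-1}\nu$ (the law of $X$ conditioned on $\{X\neq0\}$); these are symmetric. Realising $T_\nu$ as $L_\lambda$, where $(L_t)_{t\ge0}$ is the compound Poisson process $L_t=Y_1+\dots+Y_{N_t}$ with $N_t$ Poisson$(t)$, conditioning on $N_t$ gives
\[
H(t):=\E\Phi(L_t)=e^{-t}\sum_{k\ge0}\frac{t^k}{k!}\,m_k,\qquad m_k:=\E\Phi(Y_1+\dots+Y_k)\quad(m_0=\Phi(0)),
\]
and so, differentiating the series termwise,
\[
H'(t)=e^{-t}\sum_{k\ge0}\frac{t^k}{k!}\,(m_{k+1}-m_k),\qquad H''(t)=e^{-t}\sum_{k\ge0}\frac{t^k}{k!}\,(m_{k+2}-2m_{k+1}+m_k).
\]
On the other hand $\E\Phi(X)=(1-\lambda)\Phi(0)+\lambda\,\E\Phi(Y)=(1-\lambda)m_0+\lambda m_1=H(0)+\lambda H'(0)$, i.e.\ $\E\Phi(X)$ is exactly the value at $t=\lambda$ of the tangent line to $H$ at $t=0$. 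Thus, once we show that $(m_k)_{k\ge0}$ is a convex sequence (equivalently $H''\ge0$, so $H$ is convex on $[0,\lambda]$), we conclude $\E\Phi(X)=H(0)+\lambda H'(0)\le H(\lambda)=\E\Phi(T_\nu)$, which is the claim.

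The crux is therefore the inequality $m_{k+2}+m_k\ge2m_{k+1}$. As $Y_{k+1}$ and $Y_{k+2}$ are i.i.d., this is the second-difference inequality
\[
\E\big[\Phi(S+U+V)-\Phi(S+U)-\Phi(S+V)+\Phi(S)\big]\ge0,
\]
where $S=Y_1+\dots+Y_k$ is symmetric, $U=Y_{k+1}$ and $V=Y_{k+2}$ are i.i.d.\ symmetric, and $S,U,V$ are independent. Condition on $S=s$, $|U|=a$, $|V|=b$, and average over the independent uniform signs of $U$ and $V$: writing each mixed second difference $\Phi(s+u+v)-\Phi(s+u)-\Phi(s+v)+\Phi(s)$ as $\int_0^u\!\int_0^v\Phi''(s+x+y)\,\dd y\,\dd x$, the conditional expectation of the bracket equals
\[
\frac14\int_0^a\!\int_0^b\big[\Phi''(s+x+y)-\Phi''(s+x-y)-\Phi''(s-x+y)+\Phi''(s-x-y)\big]\,\dd y\,\dd x .
\]
For fixed $y\ge0$ the map $u\mapsto\Phi''(u+y)-\Phi''(u-y)$ is nondecreasing because $\Phi''$ is convex (increments of a convex function over intervals of fixed length increase as the interval moves right), so for $x,y\ge0$ the bracketed expression is $\ge0$; hence the last display is $\ge0$ for every $s$, and averaging over $S$, $|U|$, $|V|$ gives $m_{k+2}+m_k\ge2m_{k+1}$.

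The only genuinely substantive point is this last second-difference inequality, and it is the sole place where the full hypothesis ``$\Phi''$ convex'' (as opposed to merely $\Phi''\ge0$) is used; the telescoping, the termwise differentiation, and the elementary identities are bookkeeping. I expect this to be the main obstacle only in the sense that one must spot the right reformulation — once recast as a mixed second difference of $\Phi''$, it is a one-line calculus fact. One should also note that the reduction to $n=1$ cannot be bypassed: interpolating all summands simultaneously along a single compound Poisson process only controls $(1-\Lambda)\Phi(0)+\sum_j\E[\Phi(X_j)\1_{X_j\neq0}]$ with $\Lambda=\nu(\R)$, not $\E\Phi(\sum_jX_j)$, so the $X_j$ must be Poissonised successively. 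The remaining analytic details — justifying differentiation under the integral, treating $\Phi$ of superpolynomial growth by approximation, and confirming that the one-sided value $H'(0)=m_1-m_0$ is what the series yields — are standard.
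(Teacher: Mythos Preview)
The paper does not prove Theorem~\ref{thm:Utev-Poisson}; it is quoted from Utev \cite{U} as an external tool and used as a black box in the proof of Lemma~\ref{lm:to-i.i.d.} and of \eqref{eq:ros-mix-p>4}. There is therefore no ``paper's own proof'' to compare against.

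That said, your argument is correct and self-contained. The reduction to $n=1$ via successive replacement is clean: the key check, that $g(x)=\E\Phi(S+x)$ remains even with convex second derivative whenever $S$ is symmetric, is exactly what one needs, and your verification is right. For the one-variable step, the reformulation $\E\Phi(X)=H(0)+\lambda H'(0)$ (so that the inequality becomes $H(0)+\lambda H'(0)\le H(\lambda)$, i.e.\ convexity of $H$ on $[0,\lambda]$) is the decisive observation; reducing this to convexity of the sequence $(m_k)$ is immediate from the Poisson mixture formula. The second-difference computation is also correct: after averaging over the signs of $U$ and $V$ and using the mixed integral representation of the second difference, the integrand
\[
\Phi''(s+x+y)-\Phi''(s+x-y)-\Phi''(s-x+y)+\Phi''(s-x-y)
\]
is nonnegative for $x,y\ge0$ precisely because $u\mapsto\Phi''(u+y)-\Phi''(u-y)$ is nondecreasing when $\Phi''$ is convex. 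Your parenthetical remarks about integrability and truncation are adequate for a sketch; the only place one might want an extra sentence is to note that $g$ inherits evenness from the symmetry of $S$ \emph{together with} the evenness of $\Phi$ (you use both, implicitly).
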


We will also need to use the fact that $3$-point distributions are extremal among all symmetric distributions with fixed moments.

\begin{theorem}[Utev, Theorem 5 in \cite{U}]\label{thm:Utev-3point}
For $p \geq 4$, we have
\begin{equation}\label{eq:Utev-M}
\sup_{X \in \cM(n,p,a,b)} \E\left|\sum_{j=1}^n X_j\right|^p = \E\left|\sum_{j=1}^n \left(\frac{b_j^p}{a_j^2}\right)^{\frac{1}{p-2}}\theta_{j,\mu_j}\e_j\right|^p,
\end{equation}
where the $\theta_{j,\mu_j}$ denote i.i.d. Bernoulli random variables with parameter $\mu_j = \left(a_j/b_j\right)^{\frac{2p}{p-2}}$, {\red independent of the sequence $(\e_j)$}.%, see (2.7) in \cite{U}.
\end{theorem}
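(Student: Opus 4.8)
The plan is to establish the nontrivial inequality ``$\le$'' in \eqref{eq:Utev-M}; the reverse is immediate, since the tuple $Y=(Y_1,\dots,Y_n)$ with $Y_j=\big(b_j^p/a_j^2\big)^{\frac{1}{p-2}}\theta_{j,\mu_j}\e_j$ lies in $\cM(n,p,a,b)$ — a one-line computation gives $\E Y_j^2=a_j^2$ and $\E|Y_j|^p=b_j^p$, while the standing assumption $a_j\le b_j$ guarantees $\mu_j=(a_j/b_j)^{\frac{2p}{p-2}}\le1$. Writing $c_j=\big(b_j^p/a_j^2\big)^{\frac{1}{p-2}}$, it remains to prove $\E\big|\sum_{j=1}^nX_j\big|^p\le\E\big|\sum_{j=1}^nY_j\big|^p$ for every $X\in\cM(n,p,a,b)$. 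A routine truncation (replace each $X_j$ by $X_j\1_{\{|X_j|\le M\}}$, which is again symmetric, still satisfies the moment bounds, and converges back by dominated convergence) lets us assume throughout that every $X_j$ is bounded, so that all the expectations below are finite and differentiation under the expectation is legitimate.

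The core of the argument is a replacement of the summands one coordinate at a time, which hinges on the following single-variable inequality: \emph{if $W$ is a bounded symmetric random variable with $\E W^2\le a_1^2$ and $\E|W|^p\le b_1^p$, and $S$ is any symmetric random variable independent of $W$ and of $Y_1$, then $\E|W+S|^p\le\E|Y_1+S|^p$.} Granting this and applying it conditionally on $X_2,\dots,X_n$ with $S=\sum_{j\ge2}X_j$ (a symmetric variable) gives $\E\big|\sum_jX_j\big|^p\le\E\big|Y_1+\sum_{j\ge2}X_j\big|^p$; repeating with $X_2$ (now $S=Y_1+\sum_{j\ge3}X_j$), then $X_3$, and so on, after $n$ steps yields the bound by $\E\big|\sum_jY_j\big|^p$, and since $Y\in\cM(n,p,a,b)$ the supremum in \eqref{eq:Utev-M} equals this value.

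To prove the single-variable inequality, set $h(x)=\E|x+S|^p$, an even, convex, $C^2$ function on $\R$ with $h'(0)=0$. Since $h$ is even, $\E|W+S|^p=\E\,h(|W|)$ and $\E|Y_1+S|^p=\mu_1h(c_1)+(1-\mu_1)h(0)$, so, writing $R=|W|\ge0$ — a nonnegative random variable with $\E R^2\le\mu_1c_1^2$ and $\E R^p\le\mu_1c_1^p$ — the claim becomes $\E\,h(R)\le\mu_1h(c_1)+(1-\mu_1)h(0)$. This is a Chebyshev-type moment problem with moment functions $1,x^2,x^p$ on $[0,\infty)$, and the natural route is to produce constants $\beta,\gamma\ge0$ with
\[
h(x)\ \le\ h(0)+\beta x^2+\gamma x^p\qquad\text{for all }x\ge0,
\]
with equality at $x=c_1$; the canonical choice makes the right-hand side tangent to $h$ at $c_1$, i.e.\ $\beta c_1^2+\gamma c_1^p=h(c_1)-h(0)$ and $2\beta c_1+p\gamma c_1^{p-1}=h'(c_1)$ (a nonsingular $2\times2$ linear system for $p\ne2$). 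Once this majorisation is in force, nonnegativity of $\beta,\gamma$ lets us integrate against the law of $R$: $\E\,h(R)\le h(0)+\beta\E R^2+\gamma\E R^p\le h(0)+\beta\mu_1c_1^2+\gamma\mu_1c_1^p=\mu_1h(c_1)+(1-\mu_1)h(0)$, as wanted. (When $p=4$ everything is trivial: then $h(x)=x^4+6x^2\E S^2+\E S^4$ already lies in the span of $1,x^2,x^4$ with nonnegative coefficients of $x^2$ and $x^4$; the substance is in the range $p>4$.)

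The main obstacle is verifying the two positivity facts for the tangent majorant: that $\beta,\gamma\ge0$, and that $G(x):=h(0)+\beta x^2+\gamma x^p-h(x)$ — a function with double zeros at $0$ and $c_1$ and with $G(x)\to+\infty$ — is nonnegative throughout $[0,\infty)$. Equivalently, $h$ must be convex with respect to the Chebyshev system $\{1,x^2,x^p\}$, so that the two-atom law on $\{0,c_1\}$ \emph{maximises} (rather than minimises) $\E\,h(R)$ over probability measures with the prescribed second and $p$-th moments — a generalised third-order convexity property of $h$. A computation traces this back to the pointwise inequality
\[
h'''(x)\ \ge\ p(p-1)(p-2)x^{p-3}=\big(|\cdot|^p\big)'''(x)\qquad(x>0)
\]
(which, for instance, already forces the leading coefficient $\gamma\ge1$); writing $S=|S|\e'$ for an independent Rademacher $\e'$ and using symmetry of $S$, it rests on the convexity of $t\mapsto t^{p-3}$ on $[0,\infty)$ — i.e.\ on $p\ge4$ — through estimates such as $(x+s)^{p-3}+(x-s)^{p-3}\ge2x^{p-3}$ for $0\le s\le x$. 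For $2<p<4$ this mechanism fails (for $3<p<4$ the exponent $p-3$ lies in $(0,1)$ and $t\mapsto t^{p-3}$ is concave, reversing the comparison) and, consistently, the extremising configuration is genuinely different, as already visible in the distinct formula for the range $2<p<4$ in Theorem~\ref{thm:ros-mix}; hence the hypothesis $p\ge4$ cannot be dropped.
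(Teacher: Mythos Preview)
The paper does not prove Theorem~\ref{thm:Utev-3point} at all: it is quoted verbatim as a result of Utev, with a reference to \cite{U}, and is used as a black box in the proof of \eqref{eq:ros-mix-p>4}. There is therefore no ``paper's proof'' to compare your attempt against.

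Your overall strategy --- the coordinate-by-coordinate replacement reducing to a single-summand comparison $\E h(|W|)\le\mu_1 h(c_1)+(1-\mu_1)h(0)$ with $h(x)=\E|x+S|^p$, followed by a tangent majorant $h(x)\le h(0)+\beta x^2+\gamma x^p$ --- is indeed the natural route (and essentially the one Utev takes). Your verification that $h'''(x)\ge p(p-1)(p-2)x^{p-3}$ for $p\ge4$ is correct: writing $S$ symmetrically and conditioning on $|S|=s$, one checks $(x+s)^{p-3}+(x-s)^{p-3}\ge 2x^{p-3}$ for $0\le s\le x$ by convexity of $t\mapsto t^{p-3}$, and $(s+x)^{p-3}-(s-x)^{p-3}\ge 2x^{p-3}$ for $s\ge x$ by monotonicity in $s$.

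The genuine gap is the step you label ``a computation traces this back to\dots'': you never actually show that the third-derivative inequality forces $\beta\ge0$ and, more importantly, that the tangent majorant $G(x)=h(0)+\beta x^2+\gamma x^p-h(x)$ is nonnegative on \emph{all} of $[0,\infty)$. Knowing only $G(0)=G'(0)=G(c_1)=G'(c_1)=0$ and $G(+\infty)=+\infty$ does not rule out $G$ dipping below zero between $0$ and $c_1$ or just past $c_1$; one needs a sign-change count for $G$ (or for $G'$, $G''$) that limits the number of roots. This is exactly the content of a lemma such as the paper's Lemma~\ref{lm:sign} (proved there for $S$ Rademacher): from the structure of $h'''(x)-\gamma p(p-1)(p-2)x^{p-3}$ one argues that $G$ can have at most the two double roots already present, hence cannot change sign. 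Your sketch gestures at this but stops short; to make the proof complete you should either carry out this root-counting argument for general symmetric $S$, or invoke the Markov--Krein/Tchebycheff-system theory explicitly by checking that $\{1,x^2,x^p,h(x)\}$ is a $T$-system on $[0,\infty)$ when $p\ge4$.
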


We are ready to prove \eqref{eq:ros-mix-p>4}. We begin with a lemma showing that i.i.d.~sequences are extremal.

\begin{lemma}\label{lm:to-i.i.d.}
Let $p \geq 3$, $A, B > 0$ and let $V$ be a symmetric random variable in $L_p$. We have,
\begin{equation} \label{eq:iiiid}
\sup_{n \geq 1,\ X \in \cU_{V}(n, p, A,B)} \E\left|\sum_{j=1}^n X_j\right|^p = \sup_{n \geq 1,\ X \in \cU_{V}'(n, p, A,B)} \E\left|\sum_{j=1}^n X_j\right|^p.
\end{equation}
\end{lemma}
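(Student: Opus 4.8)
The plan is to show the nontrivial inequality ``$\leq$'', since ``$\geq$'' is immediate from the inclusion $\cU_V'(n,p,A,B) \subseteq \cU_V(n,p,A,B)$ (up to relabeling $n$). Fix $n$ and a tuple $X = (X_1,\dots,X_n) \in \cU_V(n,p,A,B)$ with $X_j$ distributed as $R_j V$ for independent nonnegative $R_j$. The key idea is a \emph{symmetrization-by-averaging} trick: replace the single block $(X_1,\dots,X_n)$ by $m$ independent blocks, in each of which the $n$ summands are randomly permuted, and then let $m \to \infty$. Concretely, for a large integer $m$, consider the $mn$-tuple consisting of $m$ independent copies of $(\tfrac{1}{m^{1/2}} X_{\pi(1)}, \dots)$—more precisely, I would define, for $k = 1,\dots,m$ and $j = 1,\dots,n$, the variables $Y_{k,j} = c_m R_j^{(k)} V_{k,j}$ where within each block $k$ we additionally apply an independent uniformly random permutation $\sigma_k$ of $\{1,\dots,n\}$ to the indices $j$, and $c_m$ is a scaling constant. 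One checks that the moment constraints $\sum \|Y_{k,j}\|_2^2 \le A^2$ and $\sum \|Y_{k,j}\|_p^p \le B^p$ are preserved for the right choice of $c_m$ (here we need $\|V\|_p < \infty$, which holds by hypothesis), and that, because the contribution of each block is scaled down, the sum $\sum_{k,j} Y_{k,j}$ is a normalized sum of $m$ i.i.d. mean-zero blocks.

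The second step is a limiting/comparison argument. By the triangle-type inequality $\E|S+T|^p \ge \E|S|^p + \E|T|^p$ for independent symmetric $S,T$ (used already in the excerpt), summing over blocks does not decrease the $p$-th moment; more importantly, I want to argue that the $p$-th moment of $\sum_{k,j} Y_{k,j}$ \emph{converges} (as $m\to\infty$) to the $p$-th moment of an i.i.d. configuration. The natural mechanism: within each block the permutation makes the $n$ summands ``exchangeable'', and a normalized sum of $m$ i.i.d. exchangeable blocks, in the limit $m\to\infty$, behaves (by a CLT-type / Lévy-process argument à la Utev's Poissonisation, Theorem \ref{thm:Utev-Poisson}) like a sum governed by the \emph{averaged} Lévy measure $\nu = \tfrac1n\sum_{j=1}^n \mathrm{Law}(X_j \mid X_j \ne 0)$ times the appropriate mass—which is exactly the Lévy measure one gets from an i.i.d. sequence all distributed as a single well-chosen $V$-mixture. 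Uniform integrability of $\{|\sum Y_{k,j}|^p\}$, needed to pass moments through the limit, follows from Rosenthal's inequality \eqref{eq:ros} (the family is bounded in $L_{p+\delta}$ after a mild strengthening, or one truncates). Thus $\E|\sum_j X_j|^p \le \lim_{m} \E|\sum_{k,j} Y_{k,j}|^p = \E|\sum_{i=1}^{N} W_i|^p$ for an i.i.d. sequence $(W_i)$ of $V$-mixtures in $\cU_V'$, which is the desired bound.

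An alternative, possibly cleaner route—and the one I would actually try first—is to bypass exchangeability and go directly through Poissonisation. By Theorem \ref{thm:Utev-Poisson} applied to $\Phi(x) = |x|^p$ (legitimate for $p \ge 3$, since then $\Phi'' (x)= p(p-1)|x|^{p-2}$ is convex), we have $\E|\sum_{j=1}^n X_j|^p \le \E|T_\nu|^p$ with $\nu(\Gamma) = \sum_j \p{X_j \in \Gamma\setminus\{0\}}$. Now $T_\nu \overset{d}{=} \sum_{i=1}^\xi W_i$ where $W_i$ are i.i.d. with law $\nu/\nu(\R)$ and $\xi \sim \mathrm{Poisson}(\nu(\R))$. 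The point is that this Poissonised object is itself (after conditioning on $\xi$) a sum of i.i.d. variables, and one should check that $T_\nu$ can be realized as (a limit of) configurations in $\cU_V'$: truncate $\xi$ at a large value $N$, observe $\{\xi \le N\}$ contributes the dominant mass, and note that $W_i$ is a $V$-mixture precisely when $\nu$ is supported on dilates of $V$—which it is, since each $X_j$ is a $V$-mixture, so $\mathrm{Law}(X_j\mid X_j\ne 0)$ is supported on $\{rV : r>0\}$ and hence so is $\nu/\nu(\R)$. Matching the constraints: $\nu(\R) \cdot \E W_1^2 = \sum_j \|X_j\|_2^2 \le A^2$ and $\nu(\R)\cdot\E|W_1|^p = \sum_j\|X_j\|_p^p \le B^p$, so the i.i.d. family $(W_i)_{i\le N}$ (with $N$ blocks, all law $W_1$) lies in $\cU_V'(N,p,A,B)$, and $\E|\sum_{i\le N} W_i|^p \uparrow \E|T_\nu|^p$ as $N\to\infty$ by monotone/dominated convergence. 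The main obstacle in either approach is the same: carefully justifying that the constraint classes are closed under these operations (in particular that $\nu/\nu(\R)$ genuinely produces a $V$-mixture, and that a Poisson-indexed i.i.d. sum is approximable by a \emph{finite, deterministic-length} i.i.d. tuple obeying the constraints with the same or better parameters), plus the uniform-integrability bookkeeping to move the $p$-th moment across the limit—both of which should go through cleanly using \eqref{eq:ros} and the $V$-mixture structure.
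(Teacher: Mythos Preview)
Your second (Poissonisation) approach is the right one and matches the paper's overall architecture: use Theorem~\ref{thm:Utev-Poisson} with $\Phi(x)=|x|^p$ (convex second derivative for $p\ge 3$) to bound the non-i.i.d.\ supremum by a proxy $Q=\sup_\nu \E|T_\nu|^p$ over $V$-type L\'evy measures $\nu$ satisfying the moment constraints, and then show $Q$ is already attained by i.i.d.\ configurations. Your verification that $\nu/\nu(\R)$ is the law of a $V$-mixture is correct and is exactly the content of the paper's ``$V$-measure'' notion.

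However, the approximation step as you describe it does not work. Taking $N$ i.i.d.\ copies of $W_1$ (with $W_1\sim\nu/\nu(\R)$) fails on two counts. First, the constraint check: you need $N\|W_1\|_2^2\le A^2$, but you only know $\nu(\R)\|W_1\|_2^2\le A^2$, so the tuple $(W_1,\dots,W_N)$ leaves $\cU_V'(N,p,A,B)$ as soon as $N>\nu(\R)$. Second, and more seriously, $\E\big|\sum_{i\le N}W_i\big|^p$ does \emph{not} tend to $\E|T_\nu|^p$ as $N\to\infty$; by the CLT it grows like $N^{p/2}$. Truncating $\xi$ at $N$ does not help either, since a tuple whose length is random (Poisson) is not an element of any $\cU_V'(N,p,A,B)$.

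The fix, which is the paper's Step~II, is a \emph{thinning} construction: for $n\ge\nu(\R)$ take i.i.d.\ $X_1,\dots,X_n$ with law $\tfrac{1}{n}\nu$ on $\R\setminus\{0\}$ and an atom of mass $1-\nu(\R)/n$ at $0$. These are $V$-mixtures, satisfy $\sum_j\|X_j\|_2^2=\int x^2\,\dd\nu\le A^2$ and likewise for the $p$-th moment \emph{for every} $n$, and the characteristic function of $\sum_j X_j$ is $\big(1+\tfrac1n\int(e^{itx}-1)\,\dd\nu\big)^n\to\exp\int(e^{itx}-1)\,\dd\nu$, so $\sum_j X_j\Rightarrow T_\nu$. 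Finally, uniform integrability is not needed: Fatou's lemma gives $\E|T_\nu|^p\le\liminf_n\E\big|\sum_j X_j\big|^p$, which is the direction you want. (This is fortunate, since you only have $V\in L_p$, not $L_{p+\delta}$, in this lemma.) Your first, block-permutation approach is not needed and would face the same scaling issue.
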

\begin{proof}
We shall say that a finite nonnegative Borel measure $\nu$ on $\R$ is a {\red $V$-measure} if it is of the form
\[
\nu(\Gamma) = \int_0^\infty \p{rV \in \Gamma \setminus \{0\}} \dd \eta(r), \qquad \Gamma \in \cB(\R)
\]
for some finite nonnegative Borel measure $\eta$ on $(0,\infty)$. We shall argue that both suprema in \eqref{eq:iiiid} are equal to the proxy
\begin{equation}
Q = \sup_{\nu \in \cW_V(p,A,B)} \E|T_\nu|^p,
\end{equation}
where
\[
\cW_V(p,A,B) = \left\{\nu: \ \nu\mbox{ is a }V\mbox{-mixture with} \ \nu(\{0\}) = 0, \ \int_{\R} x^2 \dd \nu(x) \leq A^2, \   \int_{\R} |x|^p \dd \nu(x) \leq B^p \right\}.
\]

\noindent \emph{Step I.} We have,
\[
\sup_{n \geq 1, X \in \cU_{V}'(n, p, A,B)} \E\left|\sum_{j=1}^n X_j\right|^p \leq \sup_{n \geq 1, X \in \cU_{V}(n, p, A,B)} \E\left|\sum_{j=1}^n X_j\right|^p \leq Q,
\]
where the first inequality is clear and the second one follows from \eqref{eq:Utev-Poisson} for $\Phi(x)=|x|^p$. 

\noindent \emph{Step II.}
We now have to show that
\[
Q \leq \sup_{n \geq 1, X \in \cU_{V}'(n, p, A,B)} \E\left|\sum_{j=1}^n X_j\right|^p.
\]
To this end, fix $\nu \in \cW_V(p,A,B)$. Take an integer $n \geq \nu(\R)$ and i.i.d.~random variables $X_1, \dots, X_n$ with the law specified by
\[
\p{X_1 \in \Gamma} = \frac{1}{n}\nu(\Gamma), \qquad \Gamma \in \cB(\R)\mbox{ with } 0 \notin \Gamma
\]
and $\p{X_1 = 0} = 1 - \frac{1}{n}\nu(\R)$. Since $\nu$ is a {\red $V$-measure}, the random variables $X_j$ are also i.i.d.~$V$-mixtures. Moreover, $X = (X_1, \dots, X_n) \in \cU'_V(n,p,A,B)$, as $\nu \in \cW_V(p,A,B)$. Note that the characteristic function of $\sum_{j=1}^n X_j$ equals
\[
\left(1 - \frac{1}{n}\nu(\R) + \frac{1}{n}\int_{\R} e^{itx} \dd\nu(x)\right)^n = \left(1 + \frac{1}{n}\int_{\R} (e^{itx}-1) \dd\nu(x)\right)^n,
\]
and thus $\sum_{j=1}^n X_j$ converges in distribution to $T_\nu$ as $n \to \infty$. By Fatou's lemma, we get
\[
\E|T_{\nu}|^p \leq \liminf_{n \to \infty} \E\left|\sum_{j=1}^n X_j\right|^p
\]
which finishes the proof of \eqref{eq:iiiid}.
\end{proof}

\begin{remark}
We stress here that the above proof of Lemma \ref{lm:to-i.i.d.} works for $p\geq 3$. It is not known to us if the same assertion, i.e. that the i.i.d. and non-i.i.d. Rosenthal constants in the class of $V$-mixtures match, remains true in the regime $2\leq p\leq 3$. Note that our example in the proof of \eqref{eq:ros-mix-p<4}, which works for $2\leq p\leq 4$, consists of two blocks of i.i.d. rather than a single sequence of i.i.d. random variables.
\end{remark}

\begin{proof}[Proof of \eqref{eq:ros-mix-p>4}]
By virtue of Lemma \ref{lm:to-i.i.d.}, we will equivalently show \eqref{eq:ros-mix-p>4} with $\cU_V'$ in place of $\cU_V$. First we argue that ``$\leq$'' holds. To this end, take an $n$-tuple $X = (X_1, \dots, X_n)$ of i.i.d.~$V$-mixtures in $\cU_V(n,p,A,B)$, say $X_j = R_j|V_j|\e_j$ for some i.i.d.~nonnegative random variables $R_j$ and such that the $R_j, V_j, \e_j$ are all independent. Plainly,
\[
\E\left|\sum_{j=1}^n X_j\right|^p = \E_V\E_{R,\e}\left|\sum_{j=1}^n |V_j|R_j\e_j\right|^p,
\]
so conditioning on the values of the $V_j$ and applying \eqref{eq:Utev-M} yields
\begin{align*}
\E_V\E_{R,\e}\left|\sum_{j=1}^n |V_j|R_j\e_j\right|^p &\leq \E_V\E_{\theta,\e}\left|\sum_{j=1}^n \left(\frac{(Bn^{-1/p})^p}{(An^{-1/2})^2}\frac{\|V\|_2^2}{\|V\|_p^p}\right)^{\frac{1}{p-2}}|V_j|\theta_{j}\e_j\right|^p \\
&=\left(\frac{B^p}{A^2}\frac{\|V\|_2^2}{\|V\|_p^p}\right)^{\frac{p}{p-2}}\E\left|\sum_{j=1}^n \theta_j V_j\right|^p,
\end{align*}
where the $\theta_j$ are i.i.d. Bernoulli random variables with parameter 
\[
\mu = \left(\frac{An^{-1/2}}{Bn^{-1/p}}\frac{\|V\|_p}{\|V\|_2}\right)^{\frac{2p}{p-2}} = n^{-1} \left(\frac{A}{B}\frac{\|V\|_p}{\|V\|_2}\right)^{\frac{2p}{p-2}}.
\]
Let $\nu$ be a Borel measure specified by
\begin{equation}\label{eq:nu}
\nu(\Gamma) = \left(\frac{A}{B}\frac{\|V\|_p}{\|V\|_2}\right)^{\frac{2p}{p-2}}\p{V \in \Gamma\setminus\{0\}} = n\p{\theta_1V_1 \in \Gamma\setminus\{0\}}, \qquad \Gamma \in \cB(\R).
\end{equation}
Then Theorem \ref{thm:Utev-Poisson} yields
\[
E\left|\sum_{j=1}^n \theta_j V_j\right|^p \leq \E|T_\nu|^p.
\]
Note that $T_\nu$ has the same distribution as $\sum_{j=1}^\xi \tilde V_j$, where $\xi$ is a Poisson random variable with parameter $\nu(\R)$ {\purp and $\tilde V_1, \tilde V_2, \dots$ are i.i.d. copies of a random variable with distribution $\tilde\nu(\Gamma) = \frac{1}{\nu(\R)}\nu(\Gamma) = \frac{\p{V \in \Gamma \setminus \{0\}}}{1-\p{V = 0}}$}, as in the statement of the theorem. This finishes the proof of the inequality ``$\leq$'' in \eqref{eq:ros-mix-p>4}. The reverse inequality follows by repeating the construction from Step II of the proof of Lemma \ref{lm:to-i.i.d.}. Alternatively, recalling from the proof of Lemma \ref{lm:to-i.i.d.}  the notation of the class $\cW_V$ as well as the proxy quantity $Q$, we take the measure $\nu$ defined in \eqref{eq:nu} above and rescale
\[
\tilde\nu(\Gamma) = \nu\left(\Gamma\Big/\left(\frac{B^p}{A^2}\frac{\|V\|_2^2}{\|V\|_p^p}\right)^{\frac{1}{p-2}}\right), \qquad \Gamma \in \cB(\R)
\]
to have, $\tilde\nu \in \cW_V(p,A,B)$, thus by the definition of $Q$,
\[
\left(\frac{B^p}{A^2}\frac{\|V\|_2^2}{\|V\|_p^p}\right)^{\frac{p}{p-2}} \E|T_\nu|^p = \E|T_{\tilde \nu}|^p \leq Q,
\]
which finishes the proof in view of two facts, that $T_\nu$ has the same distribution as $\sum_{j=1}^\xi \tilde V_j$ and that $Q$ equals the supremum from \eqref{eq:ros-mix-p>4} (as shown in the proof of Lemma \ref{lm:to-i.i.d.}).
\end{proof}

{\red
\begin{remark}\label{rem:Mp>4}
Under the assumptions of Theorem \ref{thm:ros-mix}, for $p \geq 4$, we also have a result for individually constrained summands, viz.
\[
 \sup_{X\in \cM_{V}(n, p, a,b)} \E\left|\sum_{j=1}^n X_j\right|^p = \E\left|\sum_{j=1}^n \left(\frac{b_j^p}{a_j^2}\frac{\|V\|_2^2}{\|V\|_p^p}\right)^{\frac{1}{p-2}}\theta_{j,\mu_j}V_j\right|^p
\]
with $\mu_{j} = \left(\frac{a_j}{b_j}\frac{\|V\|_p}{\|V\|_2}\right)^{2p/(p-2)}$. This immediately follows from Utev's Theorem \ref{thm:Utev-3point} because conditioning on the values of the $V_j$ and applying \eqref{eq:Utev-M} (with budget sequences $a_j\frac{|V_j|}{\|V\|_2}$ and $b_j\frac{|V_j|}{\|V\|_p}$) yields the upper bound
\[
\E\left|\sum_{j=1}^n X_j\right|^p \leq \E\left|\left(\frac{b_j^p}{a_j^2}\frac{\|V\|_2^2}{\|V\|_p^p}\right)^{\frac{1}{p-2}}\theta_{j,\mu_j}V_j\right|^p
\]
which is attained as every summand on the right hand side is a $V$-mixture satisfying the moment constraints.
\end{remark}
}

%------------------------------------------------------------
%------------------------------------------------------------

\section{Proof of Theorems \ref{thm:logc-ab} and \ref{thm:log-tail}}\label{sec:proofs-logc}

\subsection{Log-concave random variables} We only consider the case of the infimum and class $\cF^-$, with the obvious modifications left out to address the case of the supremum and $\cF^+$.

\begin{proof}[Proof of Lemma \ref{lm:uniq-f}]
Let $f_0, f_1 \in \cF^-$ be the uniform and two-sided exponential densities chosen such that $\int_\R x^2 f_j(x) \dd x = a^2$, $j = 0,1$. Then
\[
\int_\R |x|^pf_0(x) \dd x = 3^{p/2}(p+1)^{-1}a^p, \qquad \int_\R |x|^pf_1(x) \dd x = 2^{-p/2}\Gamma(p+1)a^p
\]
For $\rho \in [0,+\infty]$, let
\[
\gamma = \gamma(\rho) = a^{-1}\sqrt{2+\frac{\rho^3+3\rho^2}{3(\rho+1)}}
\]
which is chosen such that the density $g_\rho(x) =  f_{\frac{\rho}{\gamma}, \gamma}$ satisfies
\[
\int_\R x^2g_\rho(x) \dd x = a^2.
\]
Since $g_0 = f_1$ and $g_\infty = f_0$, the intermediate value property gives the existence of $\rho$ such that $\int_\R |x|^pg_\rho(x) \dd x = b^p$, as desired. The uniqueness follows from the fact that {\red for two arbitrary distinct densities from $\cF^-$, their difference changes sign at most twice on $(0,\infty)$}. The converse implication is classical, see for instance \cite[Remark~14]{ENT} and \cite[Proposition~5.5]{GNT}.
\end{proof}

We need several more ancillary results.

\begin{lemma}\label{lm:f-f0}
Let $g$ be an even log-concave density on $\R$ and let $f \in \cF^-$. Then $g-f$ changes sign at most $3$ times on $(0,+\infty)$.
\end{lemma}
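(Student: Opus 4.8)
The plan is to exploit the very rigid structure of the densities in $\cF^-$: on $(0,+\infty)$, every $f = f_{\alpha,\gamma}\in\cF^-$ is of the form $c$ on $(0,\alpha)$ and $c\,e^{-\gamma(x-\alpha)}$ on $(\alpha,+\infty)$ — that is, $\log f$ is piecewise linear with exactly one breakpoint, with slopes $0$ and then $-\gamma<0$ (the boundary cases $\alpha=0$ and $\gamma=+\infty$ degenerate to a single piece). So $f$ restricted to $(0,\infty)$ is an honest log-concave density whose log-derivative is a decreasing step function taking at most two values. The comparison function $g$ is an even log-concave density, hence on $(0,\infty)$ it too is log-concave, so $\varphi:=\log g$ is concave. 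I want to count sign changes of $h:=g-f$ on $(0,\infty)$, equivalently sign changes of $\psi:=\log g-\log f$ (valid on the set where $g>0$, which is an interval $(0,L)$; outside it $h=-f<0$ contributes nothing new to the count as long as I treat the single crossing into $g\equiv 0$ carefully).

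The key step is that $\psi=\log g-\log f$ is, on each of the two pieces $(0,\alpha)$ and $(\alpha,L)$ where $f$ is log-affine, a concave function minus an affine function, hence concave; so $\psi$ is piecewise concave with one possible breakpoint at $\alpha$. A concave function has at most two zeros on an interval, so $\psi$ has at most two zeros on $(0,\alpha)$ and at most two on $(\alpha,L)$. That alone gives four, one too many, so the main obstacle is to shave off one crossing. The improvement comes from the behaviour at the breakpoint $\alpha$: at $x=\alpha$ the slope of $\log f$ jumps \emph{downward} (from $0$ to $-\gamma$), so $\psi'$ jumps \emph{upward} there, i.e.\ $\psi$ is ``more increasing'' just to the right of $\alpha$ than just to the left. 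Standard sign-change bookkeeping for such a concave$+$upward-kink function shows it cannot cross zero twice on each side while also using up the kink; more precisely, if $\psi$ already has two sign changes on $(0,\alpha)$ then $\psi$ is negative at $\alpha^-$ and decreasing there, and the upward jump in slope plus concavity on $(\alpha,L)$ permits at most one further zero. I would package this as: the function $\psi$ changes from $+$ to $-$ to $+$ to $-$ at most once in that cyclic pattern — concretely, $\psi$ has at most $3$ sign changes on $(0,L)$. Finally, account for the endpoint: on $(0,L)$ the count above bounds the sign changes of $g-f$, and on $[L,+\infty)$ we have $g=0<f$, so passing from $(0,L)$ to $[L,+\infty)$ can add at most the one crossing already included when $\psi\to-\infty$ at $L^-$ (or when $g$ vanishes at a finite point with $f>0$). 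Hence $g-f$ changes sign at most $3$ times on $(0,+\infty)$.

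An alternative, perhaps cleaner bookkeeping avoids logarithms near the zero set of $g$: write $h=g-f$ and note that on any maximal interval where both are positive, $h$ and $\psi=\log(g/f)$ have the same sign, and where $g=0$, $h=-f<0$. Partition $(0,\infty)$ at the single point $\alpha$. On $(0,\alpha)$, $f$ is constant, so $h=g-c$ where $g$ is log-concave hence unimodal, so $h$ changes sign at most twice there; on $(\alpha,\infty)$, $e^{\gamma(x-\alpha)}h(x)=e^{\gamma(x-\alpha)}g(x)-c$, and $e^{\gamma(x-\alpha)}g(x)$ is log-concave (product of log-concave $g$ with the log-affine exponential), hence unimodal, so again $h$ changes sign at most twice on $(\alpha,\infty)$. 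That gives $\le 4$; the saving of one is again forced by the sign pattern at $\alpha$: if $h$ had two changes on $(0,\alpha)$ it would be negative at $\alpha^-$, and unimodality of $e^{\gamma(\cdot-\alpha)}g$ on $(\alpha,\infty)$ starting from a negative value allows at most one sign change there unless the ``hump'' is straddled, which the boundary data $h(\alpha^-)<0$ prevents from producing two. I expect the delicate point — and the part needing the most care in writing — to be exactly this gluing argument at $x=\alpha$, i.e.\ ruling out the $+,-,+,-$ pattern with two changes on each side; the degenerate cases ($\alpha=0$, $\gamma=\infty$, or $g$ with unbounded vs.\ bounded support) should each be checked but are strictly easier since then $f$ has no breakpoint and the two-sided bound directly gives $\le 2$ or $\le 3$.
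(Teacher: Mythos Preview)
Your overall framework---compare $\log g$ with $\log f$ separately on the two intervals where $\log f$ is affine---is exactly what the paper does. But you have missed the one observation that makes the proof a two-line affair, and as a result your gluing step contains an actual gap.

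The point you overlook is that $g$ is not merely log-concave on $(0,\infty)$: it is \emph{even} and log-concave, hence \emph{nonincreasing} on $(0,\infty)$. Consequently, on $(0,\alpha]$, where $\log f$ is constant, the difference $\psi=\log g-\log f$ is monotone (nonincreasing), so it changes sign at most \emph{once} there, not twice. On $[\alpha,\infty)$, $\log f$ is affine and $\log g$ is concave, so $\psi$ is concave and changes sign at most twice. That gives $1+2=3$ immediately, with no gluing needed. This is the paper's entire argument.

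Your route instead bounds the count by $2+2=4$ and then tries to recover the missing $1$ by a matching argument at the breakpoint $\alpha$. The specific claims you make there are not justified: a concave function on $(\alpha,L)$ that starts from a negative value with positive slope can perfectly well have two zeros (rise through $0$, peak, fall through $0$), so ``the upward jump in slope plus concavity on $(\alpha,L)$ permits at most one further zero'' is false as a general statement about concave functions, and likewise in your alternative phrasing ``the boundary data $h(\alpha^-)<0$ prevents from producing two''. What actually rules out this scenario is precisely that the hypothesis ``$\psi$ has two sign changes on $(0,\alpha)$'' is vacuous: a nonincreasing $g$ minus a constant cannot have sign pattern $-,+,-$. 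Once you insert that observation, both of your approaches collapse to the paper's one-line argument. The degenerate cases $\alpha=0$ and $\gamma=+\infty$ are, as you note, strictly easier (a single affine piece for $\log f$, giving at most two sign changes).
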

\begin{proof}
Say $f = f_{\alpha,\gamma}$. Examining $\log g - \log f$ on $(0,+\infty)$, this difference clearly changes sign at most once on $(0,\alpha]$ (by monotonicity of $\log g$, since $\log f$ is constant there) and at most twice on $[\alpha,+\infty)$ (by concavity of $\log g$, since $\log f$ is linear there).
\end{proof}

\begin{lemma}\label{lm:psi_p}
Let $p > 4$. The following function
\begin{equation}
\psi_p(x) = |\sqrt{x}+1|^p + |\sqrt{x}-1|^p - 2x^{p/2}, \qquad x \geq 0
\end{equation}
is strictly convex on $(0,+\infty)$.
\end{lemma}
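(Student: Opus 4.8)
The plan is to reduce the convexity of $\psi_p$ to a statement about an ordinary second derivative and then verify positivity by a substitution. First I would substitute $t = \sqrt{x} \in (0,+\infty)$, so that $x = t^2$ and $\psi_p(x) = (t+1)^p + |t-1|^p - 2t^p =: \Psi_p(t)$. Convexity of $\psi_p$ in $x$ is equivalent to $\frac{\dd}{\dd x}\psi_p(x)$ being strictly increasing, and since $x \mapsto \sqrt{x}$ is increasing, it suffices to show that $t \mapsto \frac{\dd}{\dd x}\psi_p(x)\big|_{x=t^2} = \frac{1}{2t}\Psi_p'(t)$ is strictly increasing on $(0,+\infty)$. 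Equivalently, writing $h(t) = \frac{\Psi_p'(t)}{2t}$, I would show $h'(t) > 0$, i.e. $t\Psi_p''(t) - \Psi_p'(t) > 0$ on $(0,+\infty)$.

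So the core computation is to examine $\Phi(t) := t\Psi_p''(t) - \Psi_p'(t)$. Splitting into the regions $0 < t < 1$ and $t > 1$ (the point $t=1$ can be handled by continuity, noting $p > 4$ makes $\Psi_p$ twice continuously differentiable there since $|t-1|^p$ is $C^2$ for $p>2$), I would compute explicitly, for $t \neq 1$,
\begin{align*}
\Psi_p'(t) &= p(t+1)^{p-1} + p\,\sgn(t-1)|t-1|^{p-1} - 2pt^{p-1},\\
\Psi_p''(t) &= p(p-1)\big[(t+1)^{p-2} + |t-1|^{p-2} - 2t^{p-2}\big].
\end{align*}
Then $\Phi(t) = p(p-1)t\big[(t+1)^{p-2} + |t-1|^{p-2} - 2t^{p-2}\big] - p\big[(t+1)^{p-1} + \sgn(t-1)|t-1|^{p-1} - 2t^{p-1}\big]$. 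Grouping the terms by the three ``atoms'' $t+1$, $|t-1|$, $t$, one can write $\Phi(t) = p\big[(t+1)^{p-2}\big((p-1)t - (t+1)\big) + |t-1|^{p-2}\big((p-1)t - \sgn(t-1)(t-1)\big) - t^{p-2}\big((p-1)t - 2t\big)\cdot 2/? \big]$; more cleanly, using $u^{p-1} = u \cdot u^{p-2}$, one gets
\[
\Phi(t) = p\Big[\big((p-2)t - 1\big)(t+1)^{p-2} + \big((p-2)t + 1\big)|t-1|^{p-2} - 2(p-2)t\cdot t^{p-2}\Big].
\]
I would then show this bracket is strictly positive for all $t>0$ when $p>4$. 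For $t \ge 1$ this should follow from a convexity/majorization argument: compare $((p-2)t-1)(t+1)^{p-2} + ((p-2)t+1)(t-1)^{p-2}$ against $2(p-2)t\cdot t^{p-2}$ by, say, viewing the left side as an average of the function $s \mapsto ((p-2)t + \text{shift})\cdot s^{p-2}$ at $s = t\pm 1$ and exploiting convexity of $s \mapsto s^{p-2}$ together with the linear correction; the cross terms involving $\pm 1$ combine favorably because $(t+1)^{p-2} - (t-1)^{p-2} > 0$. For $0 < t < 1$ the term $|t-1|^{p-2} = (1-t)^{p-2}$ and $((p-2)t+1)(1-t)^{p-2} > 0$ while one must check $((p-2)t-1)(t+1)^{p-2} - 2(p-2)t^{p-1}$ does not overwhelm it; here I expect to use that $p-2 > 2$ so $(p-2)t$ is not too large on $(0,1)$ and crude bounds like $(t+1)^{p-2} \ge 1$, $t^{p-2} \le 1$ suffice.

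The main obstacle I anticipate is making the positivity of the bracket fully rigorous near $t=1$ and for $t$ slightly larger than $1$, where all three terms are comparable in size and the naive estimates lose too much: a Taylor expansion around $t=1$ will be needed, and one should check that the leading-order behavior (the expansion of $(t-1)^{p-2}$ with $p-2>2$ means this term and its first derivative vanish at $t=1$) still leaves a strictly positive contribution of order $(t-1)^0$ from the other two terms. An alternative, possibly cleaner route that avoids case analysis is to use the integral representation $u^{p-2} = c_p \int_0^\infty (1 - \cos(\lambda u))\lambda^{-(p-1)}\,\dd\lambda$ for suitable $c_p > 0$ (valid since $2 < p-2$... actually one needs $0 < p-2 < 2$, which fails for $p>4$, so instead use a representation with higher-order differences, e.g. $u^{p-2}$ as a positive combination via the formula involving $\int_0^\infty(e^{-\lambda} - 1 + \lambda - \dots)$-type kernels), turning the bracket into an integral of a manifestly signed quantity; I would fall back on this only if the direct estimates prove too delicate. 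In either case the structure of the argument — substitute $t=\sqrt x$, reduce to $t\Psi_p''(t) > \Psi_p'(t)$, compute, and show positivity of the resulting bracket using convexity of power functions — is the backbone of the proof.
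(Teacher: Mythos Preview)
Your reduction is the same as the paper's: write $\Psi_p(t) = (t+1)^p + |t-1|^p - 2t^p$ so that $\psi_p(x) = \Psi_p(\sqrt{x})$, and observe that strict convexity of $\psi_p$ amounts to $t \mapsto \Psi_p'(t)/t$ being strictly increasing on $(0,\infty)$. Where you diverge is in how you verify the latter. You compute $t\Psi_p''(t) - \Psi_p'(t)$ directly and arrive at the bracket
\[
\big((p-2)t - 1\big)(t+1)^{p-2} + \big((p-2)t + 1\big)|t-1|^{p-2} - 2(p-2)\,t^{p-1},
\]
whose positivity you then try to argue case by case; you yourself flag that the region near $t=1$ is delicate, and you do not close the argument.

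The paper avoids this entirely with one extra observation: $\Psi_p'(0) = 0$. For any strictly convex function $f$ with $f(0)=0$, the secant slope $f(t)/t$ is strictly increasing (write $f(s) \leq \tfrac{s}{t}f(t)$ for $0<s<t$). Hence it suffices to show that $\Psi_p'$ is strictly convex, i.e.\ $\Psi_p'''(t) > 0$ on $(0,\infty)$. This third derivative is a much cleaner object:
\[
\frac{\Psi_p'''(t)}{p(p-1)(p-2)} = (t+1)^{p-3} + \sgn(t-1)|t-1|^{p-3} - 2t^{p-3}.
\]
For $t>1$ this is $(t+1)^{p-3} + (t-1)^{p-3} - 2t^{p-3} > 0$ by strict convexity of $x \mapsto x^{p-3}$ (here $p-3>1$). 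For $0<t<1$ one divides by $p-3$ and rewrites as
\[
\int_{1-t}^{1+t} s^{p-4}\,\dd s \;-\; 2\int_0^{t} s^{p-4}\,\dd s,
\]
which is positive since $s^{p-4}$ is increasing (using $p>4$). No case analysis near $t=1$, no majorization estimates, and no integral representations are needed. Your route is not wrong in spirit, but the missing idea $\Psi_p'(0)=0 \Rightarrow$ ``convexity of $\Psi_p'$ suffices'' is precisely what turns a half-finished computation into a two-line proof.
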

\begin{proof}
Let $g(x) = |x+1|^p+|x-1|^p-2x^p$, $x \geq 0$. We have $\psi_p(x) = g(\sqrt{x})$ and $\psi_p'(x) = \frac{g'(\sqrt{x})}{2\sqrt{x}}$. Since $g'(0) = 0$, it suffices to show that $g'$ is convex, because then $\psi_p'$ is increasing. For $x > 1$, we have
\[
\frac{1}{p(p-1)(p-2)}g'''(x) = (x+1)^{p-3}+(x-1)^{p-3} - 2x^{p-3} > 0,
\]
by {\red the convexity of $x \mapsto x^{p-3}$}. For $0 < x  <1$, we have
\begin{align*}
\frac{1}{p(p-1)(p-2)(p-3)}g'''(x) &= \frac{(x+1)^{p-3}-(1-x)^{p-3} - 2x^{p-3}}{p-3} \\
&= \int_{1-x}^{1+x} t^{p-4} \dd t - 2\int_0^x t^{p-4}\dd t > 0,
\end{align*}
by the monotonicity of the integrand.
\end{proof}

\begin{lemma}\label{lm:det}
For $0 < x_1 < x_2 < x_3$ and a convex function $\phi\colon (0,+\infty) \to \R$, we have
\begin{equation}
\det\begin{bmatrix} 1 & x_1 & \phi(x_1) \\ 1 & x_2 & \phi(x_2) \\ 1 & x_3 & \phi(x_3) \end{bmatrix} \geq 0.
\end{equation}
Moreover, the inequality is strict if $\phi$ is strictly convex.
\end{lemma}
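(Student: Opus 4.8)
The plan is to expand the determinant along its third column and recognise the outcome as a positive multiple of the defining inequality of convexity. Cofactor expansion along the last column gives
\[
D := \det\begin{bmatrix} 1 & x_1 & \phi(x_1) \\ 1 & x_2 & \phi(x_2) \\ 1 & x_3 & \phi(x_3) \end{bmatrix} = (x_3-x_2)\phi(x_1) - (x_3-x_1)\phi(x_2) + (x_2-x_1)\phi(x_3),
\]
the three $2\times 2$ minors being $x_3-x_2$, $x_3-x_1$ and $x_2-x_1$ respectively.

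Next I would use $x_1<x_2<x_3$ to write $x_2 = (1-t)x_1+tx_3$ with $t = \frac{x_2-x_1}{x_3-x_1}\in(0,1)$, so that $x_3-x_2 = (1-t)(x_3-x_1)$ and $x_2-x_1 = t(x_3-x_1)$. Substituting into the formula for $D$ and factoring out $x_3-x_1>0$ yields
\[
D = (x_3-x_1)\big[(1-t)\phi(x_1) + t\phi(x_3) - \phi(x_2)\big].
\]
By convexity, $\phi(x_2)=\phi\big((1-t)x_1+tx_3\big)\le (1-t)\phi(x_1)+t\phi(x_3)$, so the bracket is nonnegative and $D\ge 0$; if $\phi$ is strictly convex then, since $t\in(0,1)$, this inequality is strict and hence $D>0$.

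There is no real obstacle here: the only points requiring a little care are the signs in the cofactor expansion and the verification that the coefficients of $\phi(x_1)$ and $\phi(x_3)$ after the rescaling are exactly $1-t$ and $t$, which is precisely what makes the convexity inequality directly applicable.
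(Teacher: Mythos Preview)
Your proof is correct and is essentially the same as the paper's own argument: the paper also rewrites the determinant inequality as
\[
\frac{x_3-x_2}{x_3-x_1}\phi(x_1) + \frac{x_2-x_1}{x_3-x_1}\phi(x_3) \geq \phi(x_2)
\]
and notes that this is just convexity. You have simply made the parametrisation $t=\frac{x_2-x_1}{x_3-x_1}$ explicit and spelled out the cofactor expansion.
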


\begin{proof}
The desired inequality is equivalent to
\[
\frac{x_3-x_2}{x_3-x_1}\phi(x_1) + \frac{x_2-x_1}{x_3-x_1}\phi(x_3) \geq \phi(x_2)
\]
which clearly follows from convexity.
\end{proof}

\begin{lemma}\label{lm:sign}
Let $p > 4$, let $\alpha, \beta, \gamma$ be real numbers. The function
\[
h(x) = |x+1|^p + |x-1|^p - \alpha - \beta x^2 - \gamma x^p
\]
has at most $3$ {\red roots on $(0,+\infty)$. Moreover, if it has exactly $3$, then it changes sign at each of them and its signature is $+,-,+,-$.}
\end{lemma}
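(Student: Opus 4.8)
The plan is to analyze the function $h(x) = |x+1|^p + |x-1|^p - \alpha - \beta x^2 - \gamma x^p$ on $(0,+\infty)$ by substituting $x = \sqrt{t}$ and rewriting things in terms of $\psi_p$. Specifically, I would write $h(\sqrt{t}) = \psi_p(t) + 2t^{p/2} - \alpha - \beta t - \gamma t^{p/2} = \psi_p(t) - \alpha - \beta t + (2-\gamma)t^{p/2}$, where $\psi_p$ is the strictly convex function from Lemma \ref{lm:psi_p}. So the roots of $h$ on $(0,+\infty)$ correspond bijectively to roots of $\Psi(t) := \psi_p(t) - \alpha - \beta t + (2-\gamma)t^{p/2}$ on $(0,+\infty)$, and sign changes are preserved. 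Thus it suffices to bound the number of roots of $\Psi$ by $3$.

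The key structural input is that $\Psi$ is a combination of three ``building blocks'': the strictly convex function $\psi_p$, the affine function $\alpha + \beta t$, and the power $t^{p/2}$. The cleanest route is to apply Lemma \ref{lm:det}: if $\Psi$ had four roots $0 < t_1 < t_2 < t_3 < t_4$, I would derive a contradiction from a generalized sign/determinant argument. Concretely, at a root we have $\psi_p(t_i) + (2-\gamma)t_i^{p/2} = \alpha + \beta t_i$, i.e. the point $(t_i, \psi_p(t_i) + (2-\gamma)t_i^{p/2})$ lies on a fixed line. Since $\psi_p$ is strictly convex and $t^{p/2}$ is convex for $p>4$, if $2 - \gamma \ge 0$ the function $t \mapsto \psi_p(t) + (2-\gamma)t^{p/2}$ is strictly convex, so it meets any line in at most $2$ points — fewer than $3$ roots, done. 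The remaining case $2 - \gamma < 0$ is the one requiring real work: here I would instead consider $\Phi(t) := \psi_p(t) - \alpha - \beta t$ (strictly convex) and write $\Psi(t) = 0 \iff \Phi(t) = (\gamma - 2) t^{p/2}$. A strictly convex function and a (fixed multiple of a) convex power function: I would argue that their difference $\Phi(t) - (\gamma-2)t^{p/2}$ has at most $3$ sign changes by differentiating once, reducing to the claim that $\Phi'(t) - (\gamma-2)\frac p2 t^{p/2-1}$ has at most $2$ zeros; since $\Phi'$ is increasing and $t^{p/2-1}$ is increasing and convex (as $p>4$), one can bound the number of crossings by analyzing $\Phi'(t)/t^{p/2-1}$ or via a second differentiation, using that $\psi_p'''$ has a controlled sign pattern (implicit in the proof of Lemma \ref{lm:psi_p}). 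This repeated-differentiation / Rolle-type argument is the main obstacle: one must be careful about behavior near $t = 0$ and at $t \to \infty$, and about the non-smoothness of $|x \pm 1|^p$ at $x = 1$, i.e. $t = 1$ — though since $p > 4$, $\psi_p$ is $C^2$ (indeed $C^3$) across $t=1$, so this is not a genuine obstruction, only a point to remark on.

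For the ``signature'' claim — that if $h$ has exactly $3$ roots then it changes sign at each and the pattern is $+,-,+,-$ — I would argue as follows. First, each root must be a genuine sign change: if $h$ touched zero without crossing at some root, one could perturb to produce more than $3$ roots of a function of the same type (or, more directly, a double root counts twice in the above convexity count and again forces $\le 3$ total with a sign pattern that cannot be $+,-,+,-$, contradiction with the assumed three distinct sign-changing roots — so I would phrase the whole lemma as ``at most $3$ roots counted with multiplicity'' to make this clean). Then the overall sign pattern is determined by the endpoints: as $x \to +\infty$, the dominant term among $|x+1|^p + |x-1|^p \sim 2x^p$ and $\gamma x^p$ is $(2-\gamma)x^p$; in the hard case $\gamma > 2$ this is negative, so $h(x) \to -\infty$, matching the final ``$-$'' in $+,-,+,-$. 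Near $x = 0^+$ one has $h(0^+) = 2 - \alpha$, and I would observe that the extremal configuration forces $h(0^+) > 0$ (the mixture/log-concavity normalization guarantees this), giving the initial ``$+$''. With $3$ sign changes and prescribed signs at both ends being $+$ and $-$, the intermediate pattern $+,-,+,-$ is forced. I expect the write-up to spend most of its length on the at-most-$3$-roots bound in the case $\gamma - 2 > 0$, and only a short paragraph on the signature.

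\begin{proof}
Substituting $x = \sqrt t$ for $t > 0$ and recalling $\psi_p(t) = |\sqrt t + 1|^p + |\sqrt t - 1|^p - 2t^{p/2}$ from Lemma \ref{lm:psi_p}, we have
\[
h(\sqrt t) = \psi_p(t) + (2-\gamma)t^{p/2} - \alpha - \beta t =: \Psi(t), \qquad t > 0,
\]
and it suffices to show that $\Psi$ has at most $3$ zeros on $(0,+\infty)$, counted with multiplicity, and to determine the signature. Since $p > 4$, the functions $t \mapsto |\sqrt t \pm 1|^p$ are $C^2$ on $(0,+\infty)$, hence so is $\psi_p$ (indeed $\psi_p \in C^1$ with $\psi_p'' $ continuous and, by the proof of Lemma \ref{lm:psi_p}, $\psi_p' $ strictly increasing and convex on $(0,+\infty)$); thus $\Psi \in C^2(0,+\infty)$ and there is no issue at $t = 1$.

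\emph{Case $\gamma \le 2$.} Then $t \mapsto \psi_p(t) + (2-\gamma)t^{p/2}$ is strictly convex on $(0,+\infty)$, being a sum of the strictly convex $\psi_p$ and the convex $(2-\gamma)t^{p/2}$ (convex since $p \ge 4$). A strictly convex function and the affine function $\alpha + \beta t$ agree at no more than $2$ points (counted with multiplicity), so $\Psi$ has at most $2$ zeros and we are done.

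\emph{Case $\gamma > 2$.} Set $c = \gamma - 2 > 0$ and $\Phi(t) = \psi_p(t) - \alpha - \beta t$, which is strictly convex on $(0,+\infty)$. Then $\Psi(t) = \Phi(t) - c\, t^{p/2}$, and by Rolle's theorem it suffices to show $\Psi'(t) = \Phi'(t) - \frac{cp}{2}\, t^{p/2-1}$ has at most $2$ zeros on $(0,+\infty)$, counted with multiplicity. Equivalently, since $t^{p/2-1} > 0$, it suffices to show that
\[
F(t) := \frac{\Phi'(t)}{t^{p/2-1}} = \frac{\psi_p'(t) - \beta}{t^{p/2-1}}
\]
attains the value $\frac{cp}{2}$ at most twice. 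Differentiating,
\[
F'(t) = \frac{\psi_p''(t)\, t^{p/2-1} - (p/2-1)t^{p/2-2}\big(\psi_p'(t)-\beta\big)}{t^{p-2}} = \frac{t\,\psi_p''(t) - (p/2-1)\big(\psi_p'(t)-\beta\big)}{t^{p/2}},
\]
so $F'(t)$ has the sign of $G(t) := t\,\psi_p''(t) - (p/2 - 1)\psi_p'(t) + (p/2-1)\beta$. Now $G'(t) = t\,\psi_p'''(t) - (p/2 - 2)\psi_p''(t)$ in the classical sense away from $t = 1$, and from the computation in the proof of Lemma \ref{lm:psi_p} one checks that $G$ is strictly monotone on $(0,+\infty)$ (both $\psi_p''$ is strictly increasing and the correction term is monotone of the same sign), hence $G$ has at most one zero; consequently $F$ is monotone, then monotone, i.e.\ $F$ has at most one interior extremum, so $F$ takes the value $\frac{cp}{2}$ at most twice. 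Therefore $\Psi'$ has at most $2$ zeros and $\Psi$ has at most $3$ zeros on $(0,+\infty)$, counted with multiplicity. This proves the first assertion.

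\emph{Signature.} Suppose $h$ has exactly $3$ distinct roots $0 < r_1 < r_2 < r_3$. By the multiplicity count just proved, each is simple, so $h$ changes sign at each $r_i$. It remains to pin down the sign on $(0,r_1)$ and on $(r_3,+\infty)$. As $x \to +\infty$,
\[
h(x) = (2-\gamma)x^p + o(x^p) = -c\, x^p + o(x^p) \to -\infty,
\]
so $h < 0$ on $(r_3,+\infty)$; with $3$ sign changes this forces the pattern to be $+,-,+,-$ provided $h > 0$ on $(0,r_1)$. Finally $h(0^+) = 2 - \alpha$; were $2 - \alpha \le 0$ the function $\Psi$ (equivalently $h$) would start nonpositive at $0^+$, end at $-\infty$, and have three sign changes in between, which is impossible (an even number of sign changes is then forced). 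Hence $h(0^+) > 0$ and the signature is $+,-,+,-$, as claimed.
\end{proof}
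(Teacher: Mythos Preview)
Your overall strategy---substitute $t=x^2$, reduce to $\Psi(t)=\psi_p(t)+(2-\gamma)t^{p/2}-\alpha-\beta t$, dispose of $\gamma\le 2$ by strict convexity, and for $\gamma>2$ run Rolle through the quotient $F(t)=(\psi_p'(t)-\beta)/t^{p/2-1}$---is a legitimate alternative to the paper's argument (which instead first \emph{deduces} $\gamma>2$ from three assumed sign changes via the determinantal Lemma~\ref{lm:det}, and then analyses $h'''$ directly). In particular you bypass Lemma~\ref{lm:det} entirely, which is a genuine simplification.

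However, the proof has a real gap at its crux: the claim that $G(t)=t\psi_p''(t)-(p/2-1)\psi_p'(t)+(p/2-1)\beta$ is strictly monotone. Your parenthetical justification (``both $\psi_p''$ is strictly increasing and the correction term is monotone of the same sign'') is incorrect---for $p=6$ one has $\psi_6(t)=30t^2+30t+2$, so $\psi_6''\equiv 60$ is constant---and in any case does not yield the conclusion. The claim \emph{is} true: writing $u=\sqrt t$ and $g(u)=|u+1|^p+|u-1|^p-2u^p$, a chain-rule computation gives
\[
8\,t^{3/2}\,G'(t)=u^2 g'''(u)-(p-1)\bigl(u g''(u)-g'(u)\bigr),
\]
and since $ug''(u)-g'(u)=\int_0^u s\,g'''(s)\,\dd s$, negativity of the right-hand side reduces (after one more differentiation) to the elementary inequality $(1+u)^{p-4}>|1-u|^{p-4}$ for $u>0$. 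This is precisely the monotonicity fact the paper uses when it verifies that the bracket in its formula for $h'''(x)$ has exactly one sign change; so your route and the paper's ultimately hinge on the same inequality, but you reach it through more layers of substitution and have not actually carried out the verification. As written, the argument is incomplete at this step. (Two minor points: in the signature paragraph you should state explicitly that three roots force $\gamma>2$ by your Case~$\gamma\le 2$; and the asymptotic $h(x)=(2-\gamma)x^p+o(x^p)$ is only the leading term once $\gamma\ne 2$ is known.)
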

\begin{proof}
It suffices to consider the case when $h$ has $3$ distinct sign changes, say at $0 < x_1 < x_2 < x_3 < \infty$ and show that it does not have any more. First note that then, 
{\red solving the linear system of equations $h(x_j) = 0$, $j=1,2,3$ in $\gamma$},
\[
\gamma-2 = \frac{\det\left[\begin{smallmatrix} 1 & x_1^2 & |x_1+1|^p + |x_1-1|^p \\ 1 & x_2^2 & |x_2+1|^p + |x_2-1|^p \\ 1 & x_3^2 & |x_3+1|^p + |x_3-1|^p\end{smallmatrix}\right]}{\det\left[\begin{smallmatrix} 1 & x_1^2 & x_1^p \\ 1 & x_2^2 & x_2^p \\ 1 & x_3^2 & x_3^p \end{smallmatrix}\right]}-2 = \frac{\det\left[\begin{smallmatrix} 1 & x_1^2 & \psi_p(x_1^2) \\ 1 & x_2^2 & \psi_p(x_2^2) \\ 1 & x_3^2 & \psi_p(x_3^2) \end{smallmatrix}\right]}{\det\left[\begin{smallmatrix} 1 & x_1^2 & x_1^p \\ 1 & x_2^2 & x_2^p \\ 1 & x_3^2 & x_3^p \end{smallmatrix}\right]},
\]
where we use the linearity of the determinant with respect to the last column 
and adapt the notation from Lemma \ref{lm:psi_p} for $\psi_p$. It follows from Lemmas \ref{lm:psi_p} and \ref{lm:det} that both the numerator and the denominator are positive and thus $\gamma > 2$. In particular, this gives that $h^{(k)}(+\infty) = -\infty$, for $k = 0,1,2,3$.
Furthermore,
\[
h'''(x) = p(p-1)(p-2)x^{p-3}\big(|1+x^{-1}|^{p-3}+|1-x^{-1}|^{p-3}\sgn(1-x^{-1})-\gamma\big)
\]
and we verify that the function in the brackets {\red has exactly one root on $(0, +\infty)$ at which it changes sign from $+$ to $-$. As $h''(+\infty) = -\infty$, we gather that $h''$  has either exactly one root on $(0,+\infty)$ and signature $+,-$, in which case the conclusion follows, or exactly two, with the signature $-,+,-$. In the latter case, since $h'(0) = 0$ and $h'(+\infty) = -\infty$, we get at most two roots of $h'$ on $(0,+\infty)$, hence $h$ has at most $3$ roots on $(0,+\infty)$. If it has exactly $3$ roots, then the signature of $h$ is $+,-,+,-$, since $h(+\infty) = -\infty$.}
\end{proof}

\begin{proof}[Proof of Theorem \ref{thm:logc-ab}]
Fix a sequence $X=(X_1,\dots,X_n)$ as in the statement, an index $1 \leq j \leq n$ and a parameter $z \in\R$. Thanks to independence, it suffices to show that
\begin{equation} \label{eq:final}
\E|X_j + z|^p \geq \E|{\bf X}_j^-+z|^p.
\end{equation}
Indeed, if \eqref{eq:final} holds, then swapping one variable at a time, we get
\[
\mathbb{E}\left| \sum_{j=1}^n X_j\right|^p \geq \mathbb{E} \left| {\bf X}_1^- + \sum_{j=2}^n X_j \right|^p \geq \cdots \geq \mathbb{E} \left| \sum_{j=1}^n{\bf X}_j^-\right|^p,
\]
which is exactly \eqref{eq:coinc}. To prove \eqref{eq:final}, let $f_j$ be the density of $X_j$ and ${\bf f}_j^-$ be the density of ${\bf X}_j^-$. Using symmetry, the inequality in question is equivalent to 
\[
\int_0^\infty \big(f_j(x) - {\bf f}_j^-(x)\big)\cdot \phi(x) \ \dd x \geq 0,
\]
where $\phi(x) = |x+z|^p+|x-z|^p$. To prove it,  we employ the technique of {\red ``interlacing densities'', \cite{ENT}}. Since the integrals of $f_j$ and ${\bf f}_j^-$ against $1$, $x^2$ and $x^p$ are the same, for every $\alpha, \beta, \gamma \in \R$, the inequality above is equivalent to
 \[
\int_0^\infty \big(f_j(x) - {\bf f}_j^-(x)\big)\cdot \big(\phi(x) - \alpha - \beta x^2 - \gamma x^p\big) \ \dd x \geq 0.
\]
For the same reason, $f_j-{\bf f}_j^-$ need change sign at least $3$ times on $(0,+\infty)$ {\red (see Lemma 21 in \cite{ENT})}. In view of Lemma \ref{lm:f-f0}, there are exactly $3$ sign changes, say at $x_1 < x_2 < x_3$ and the signature must necessarily be $+,-,+,-$ (because the first sign change will occur on the interval where ${\bf f}_j^-$ is constant and $f_j$ decreases on $(0,+\infty)$). We choose $\alpha, \beta, \gamma$ such that $f(x)=\phi(x) - \alpha - \beta x^2 - \gamma x^p$ vanishes at $x_1, x_2, x_3$. By Lemma \ref{lm:sign}, {\red $f$ in fact} changes sign on $(0,+\infty)$ exactly at those roots and has signature $+,-,+,-$. Thus the integrand is pointwise nonnegative and \eqref{eq:final} follows.
\end{proof}

{\red
\begin{remark}\label{rem:Xpm}
Given the $2$nd and $p$th moment constraints, it follows from Theorem 8 in \cite{ENT} that variables ${\bf X}_j^-$and ${\bf X}_j^+$ are also the minimisers and maximisers respectively of $\E |X_j|^q$ for every $q \in (0,2) \cup (p, \infty)$ (and maximisers and minimisers repsectively for $q \in (2, p)$). 
\end{remark}
}

%------------------------------------------------------------

\subsection{Log-concave tails}
The proof of Theorem \ref{thm:logc-ab} with minor modifications also gives Theorem \ref{thm:log-tail}. Skipping most of the details, the main point is that now we write
\begin{align*}
\E|X_j + z|^p - \E|{\bf X}_j^-+z|^p &= \int_0^\infty \phi'(t)\Big(T_{X_j}(t) - T_{{\bf X}_j^-}(t)\Big) \ \dd t \\
&= \int_0^\infty \Big(\phi'(t)-\beta t - \gamma t^{p-1}\Big)\cdot\Big(T_{X_j}(t) - T_{{\bf X}_j^-}(t)\Big) \ \dd t,
\end{align*}
where the second equality uses the constraints on the second and $p$th moments. Since ${\bf X}_j^-\in\cG^-$ (see also the definition of the classes $\mathcal{L}_2^{\pm}$ in \cite{ENT}), the difference $T_{X_j} - T_{{\bf X}_j^-}$ changes sign exactly $2$ times on $(0,+\infty)$ with signature $-,+,-$. On the other hand, choosing $\beta$ and $\gamma$ such that $\phi'(t)-\beta t - \gamma t^{p-1}$ vanishes at those sign change points, the proof of Lemma \ref{lm:sign} yields that this bracket has exactly these sign changes with signature $-,+,-$ and we arrive at the conclusion of Theorem \ref{thm:log-tail}. To get the reverse inequality (the supremum instead of the infimum), we choose ${\bf X}_j^+\in\cG^+$ and repeat the argument while noting the different sign patterns. \hfill$\Box$

%------------------------------------------------------------
%------------------------------------------------------------

\section{Further directions}

%------------------------------------------------------------

{\red
\subsection{More constraints}
We find it an interesting question how to extend the arguments of the previous section to handle more than two moment constraints, that is given $n$, say $\ell$ constraints, distinct positive parameters $p_1, \dots, p_n$ and feasible moment budgets $a_{j,k} > 0$, $j = 1, \dots, n$, $k = 1, \dots, \ell$, we would like to describe the extremisers of the infimum (respectively supremum) of $\E|\sum_{j=1}^n X_j|^p$ taken over all sequences $(X_1, \dots, X_n)$ of length $n$  consisting of independent symmetric random variables with log-concave tails/densities satisfying $\E |X_j|^{p_1} = a_{j,1}, \dots, \E |X_j|^{p_\ell} = a_{j,\ell}$, $j = 1, \dots, n$. For the case of log-concave tails, the set of feasible parameters $a_{j,k}$ is described precisely in \cite[Theorem~7]{ENT}. There are further technical challenges, for instance how to extend Lemma \ref{lm:sign}. 
}
%------------------------------------------------------------

\subsection{Sharp constants}
It has been elusive to us how to obtain an analogue of Theorem \ref{thm:ros-mix} for symmetric log-concave random variables. We find it an interesting and challenging problem. The natural approach, via Theorem \ref{thm:logc-ab}, leaves us with an optimisation problem over the $a_j$ and $b_j$, unclear how to tackle.

%------------------------------------------------------------
%------------------------------------------------------------

%\nocite{*}

\end{document}